\newtheorem{theo}{Theorem}[section]
\newtheorem{prop}[theo]{Proposition}
\newtheorem{defi}{Definition}
\newtheorem{lem}[theo]{Lemma}
\newenvironment{proof}{{\bf Proof }}{\hfill $\Box$}
\newenvironment{rmq}{{\bf Remark }}{}
\newcommand{\C}{\mathbb{C}}
\newcommand{\N}{\mathbb{N}}
\newcommand{\R}{\mathbb{R}}
\newcommand{\Z}{\mathbb{Z}}
\newcommand{\Aa}{\mathcal{A}}
\newcommand{\Pp}{\mathcal{P}}
\newcommand{\Rr}{\mathcal{R}}
\begin{document}
\title{Multi-variable orthogonal polynomials }
\author{Abdallah Dhahri\\\vspace{-2mm}\scriptsize Department of Mathematics \\\vspace{-2mm}\scriptsize Faculty of Sciences of Tunis \\
\vspace{-2mm}\scriptsize University of Tunis El-Manar
\\\vspace{-2mm}\scriptsize1060 Tunis,Tunisia\\\vspace{-2mm}\scriptsize abdallah.dhahri@fst.rnu.tn\\ \\}

\date{}
\maketitle

\begin{abstract} We characterize the atomic probability measure on
${\R}^d$ which having a finite number of atoms. We further prove
that the Jacobi sequences associated to the multiple  Hermite (resp.
Laguerre, resp. Jacobi) orthogonal polynomials are diagonal
matrices. Finally, as a consequence of the multiple Jacobi
orthogonal polynomials case, we give the Jacobi sequences of the
 Gegenbauer, Chebyshev and  Legendre  orthogonal polynomials.
\end{abstract}
\section{Introduction} Let $\mu$ be a probability measure on ${\R}$
with finite moments of all orders. Apply the Gram-Schmidt
orthogonalization process to the sequence
$\left\{1,x^2,\dots,x^n,\dots\right\}$ to get a sequence
$\left\{P_n(x); \ \ n=0,1,\dots \right\}$ of orthogonal polynomials
in $L^2(\mu)$, where $P_0(x)=1$ and $P_n(x)$ is a polynomial of
degree $n$ with leading coefficient 1. It is well-known that these
polynomials $P_n$'s satisfy the recursion formula:
$$(x-\alpha_n)P_n(x)=P_{n+1}(x)+\omega_nP_{n-1}(x), \ \ n\geq0.$$
where $\alpha_n\in{\R},\omega_n\geq0$ for $n\geq0$ and $P_{-1}=0$ by
convention. The sequences $(\alpha_n)_n$ and $(w_n)_n$ are called
the Jacobi sequences associated to the probability measure $\mu$ (cf
\cite{ch1978}, \cite{Is-As}, \cite{sz1975}).\\

 In the multi-dimensional case (cf \cite{DX},\cite{K1}, \cite{K2},\cite{X})
the formulations of these results are recently given by identifying
the theory of multi-dimensional orthogonal polynomials with the
theory of symmetric interacting Fock spaces (cf \cite{AcBaDh}).
 The multi-dimensional analogue of positive numbers $w_n$ (resp. real numbers $\alpha_n$)
 are the positive definite matrices (resp. Hermitean matrices).

In this paper, we characterize the atomic probability measures on
${\R}^d$ which having a finite number of atoms. Moreover, we give
the Jacobi sequences associated to the multiple Hermite (resp.
Laguerre, resp. Jacobi) orthogonal polynomials and we prove that
they are diagonal matrices. As a corollary of the Jacobi case, we
give the explicit forms of the ones associated to the Gegenbaur,
Chebyshev  and Legendre orthogonal polynomials.

This paper is organized as follows. In section 2 we recall the basic
properties of the complex polynomial algebra in $d$ commuting
indeterminates  and we give the multi-dimensional Favard Lemma. The
characterization of the atomic probability  measures on ${\R}^{d}$
which having a finite number of atoms is given in section 3.
Finally, in section 4 we give the explicit forms of the Jacobi
sequences associated to the multiple Hermite (resp. Laguerre, resp.
Jacobi) orthogonal polynomials.


\section{The multi-dimensional Favard Lemma}
In this section we recall the basic properties of the polynomial
algebra in $d$ commuting indeterminates and we give the
multi-dimensional Favard Lemma. We refer the interested reader to
\cite{AcBaDh} for more details.

\subsection{The polynomial algebra in $d$ commuting indeterminates}
Let $d\in \mathbb{N}^*$ and let
$$\mathcal{P}=\mathbb{C}[(X_j)_{1\leq j\leq d}]$$
be the complex polynomial algebra in the commuting indeterminates
$(X_j)_{1\leq j\leq d}$ with the $*$-structure uniquely determined
by the prescription that the $X_j$ are self-adjoint. For all
$v=(v_1,\dots,v_d)\in\mathbb{C}^d$ denote
$$X_v:=\sum_{j=1}^dv_jX_j$$

A {\it monomial} of degree $n\in\mathbb{N}$ is by definition any
product of the form
$$M:=\prod_{j=1}^d X_j^{n_j}$$
where, for any $1\leq j\leq d$, $n_j\in\mathbb{N}$ and
$n_1+\dots+n_d=n$.

Denote by $\mathcal{P}_{n]}$ the vector subspace of $\mathcal{P}$
generated by the set of monomials of degree less or equal than $n$.
It is clear that
$$\mathcal{P}=\cup_{n\in\mathbb{N}}\mathcal{P}_{n]}$$
\begin{defi}
For $n\in\mathbb{N}$ we say that a subspace
$\mathcal{P}_n\subset\mathcal{P}_{n]}$ is monic of degree $n$ if
$$\mathcal{P}_{n]}=\mathcal{P}_{n-1]}\dot+\mathcal{P}_n$$
(with the convention $\mathcal{P}_{-1]}=\{0\}$ and where $\dot+$
means a vector space direct sum) and $\mathcal{P}_n$ has a linear
basis $\mathcal{B}_n$ with the property that for each
$b\in\mathcal{B}_n$, the highest order term of $b$ is a non-zero
multiple of a monomial of degree $n$. Such a basis is called a
perturbation of the monomial basis of order $n$ in the coordinates
$(X_j)_{1\leq j\leq d}$.
\end{defi}

Note that any state $\varphi$ on $\mathcal{P}$ defines a pre-scalar
product
\begin{eqnarray*}
\langle.,.\rangle_\varphi:\mathcal{P}\times\mathcal{P}&\rightarrow& \mathbb{C}\\
(a,b)&\mapsto&\langle a,b\rangle_\varphi=\varphi(a^*b)
\end{eqnarray*}
with $\langle1_\mathcal{P},1_\mathcal{P}\rangle_\varphi=1$.
\begin{lem}\label{state}
Let $\varphi$ be a state on $\mathcal{P}$ and denote $\langle\
\cdot,\cdot \ \rangle=\langle\ \cdot,\cdot \ \rangle_\varphi$ be the
associated pre-scalar product. Then there exists a gradation
\begin{equation}\label{phi-gradation}
\mathcal{P}=\bigoplus_{n\in\mathbb{N}}\left(\mathcal{P}_{n,\varphi},\:\langle
\ \cdot,\cdot \ \rangle_{n,\varphi}\right)
\end{equation}
called a $\varphi$-orthogonal polynomial decomposition of
$\mathcal{P}$, with the following properties:
\begin{enumerate}
\item[(i)] (\ref{phi-gradation}) is orthogonal for the unique pre-scalar product $\langle\ \cdot,\cdot\ \rangle$ on $\mathcal{P}$ defined by the conditions:
\begin{eqnarray*}
\langle\ \cdot,\cdot\ \rangle|_{\mathcal{P}_n,\varphi}&=&\langle \ \cdot,\cdot \ \rangle_{n,\varphi},\qquad \forall n\in\mathbb{N}\\
\mathcal{P}_{m,\varphi}&\bot&\mathcal{P}_{n,\varphi}, \qquad \forall
m\neq n
\end{eqnarray*}
\item[(ii)] (\ref{phi-gradation}) is compatible with the filtration $(\mathcal{P}_{n]})_n$ in the sense that
$$\mathcal{P}_{n]}=\bigoplus_{h=0}^n\mathcal{P}_{h,\varphi},\qquad \forall n\in\mathbb{N},$$
\item[(iii)] for each $n\in\mathbb{N}$ the space $\mathcal{P}_{n,\varphi}$ is monic.
\end{enumerate}

Conversely, let be given:
\begin{enumerate}
\item[(j)] a vector space direct sum decomposition of $\mathcal{P}$
\begin{equation}\label{direct-sum}
\mathcal{P}=\sum_{n\in\mathbb{N}}^{\cdot}\mathcal{P}_n
\end{equation}
such that $\mathcal{P}_0=\mathbb{C}.1_\mathcal{P}$, and for each
$n\in\mathbb{N}$, $\mathcal{P}_n$ is monic of degree $n$,
\item[(jj)] for all $n\in\mathbb{N}$ a pre-scalar product $\langle \ \cdot,\cdot \ \rangle_{n}$ on $\mathcal{P}_n$ with the property that $1_\mathcal{P}$ has norm $1$ and the unique pre-scalar product $\langle \ \cdot,\cdot \ \rangle$ on $\mathcal{P}$ defined by the conditions:
\begin{eqnarray*}
\langle\ \cdot,\cdot\ \rangle|_{\mathcal{P}_n}&=&\langle \ \cdot,\cdot \ \rangle_{n},\qquad \forall n\in\mathbb{N}\\
\mathcal{P}_{m}&\bot&\mathcal{P}_{n}, \qquad \forall m\neq n
\end{eqnarray*}
satisfies $\langle 1_\mathcal{P},1_\mathcal{P}\rangle=1$ and
multiplication by the coordinates $X_j$ ($1\leq j\leq d$) are
$\langle\ \cdot,\cdot\ \rangle$-symmetric linear operators on
$\mathcal{P}$.\smallskip
\end{enumerate}
Then there exists a state $\varphi$ on $\mathcal{P}$ such that the
decomposition (\ref{direct-sum}) is the orthogonal polynomial
decomposition of $\mathcal{P}$ with respect to $\varphi$.
\end{lem}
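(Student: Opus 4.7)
The statement has two implications and I would handle them separately. For the forward direction, starting from the pre-scalar product $\langle\cdot,\cdot\rangle:=\langle\cdot,\cdot\rangle_\varphi$ I would build the subspaces $\mathcal{P}_{n,\varphi}$ inductively by a Gram--Schmidt procedure applied to the filtration $(\mathcal{P}_{n]})_n$. For the converse, the natural candidate for the state is $\varphi(a):=\langle 1_\mathcal{P},a\rangle$, and the main task is to verify that this functional is indeed a state and that it reproduces the given pre-scalar product, so that conditions (j)--(jj) rephrase as (i)--(iii).

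For the forward direction, set $\mathcal{P}_{0,\varphi}=\mathbb{C}\cdot 1_\mathcal{P}$ and proceed by induction on $n$, assuming a pairwise orthogonal decomposition $\mathcal{P}_{n-1]}=\bigoplus_{h=0}^{n-1}\mathcal{P}_{h,\varphi}$ has been built. The subtle point is that $\langle\cdot,\cdot\rangle$ may be degenerate on $\mathcal{P}_{n-1]}$, so one cannot simply take an orthogonal complement. Instead, for each monomial $M$ of degree $n$ I consider the linear functional $p\mapsto\langle p,M\rangle$ on the finite-dimensional space $\mathcal{P}_{n-1]}$. The Cauchy--Schwarz inequality, which holds for any positive semidefinite form, shows that this functional vanishes on the radical $\mathcal{N}_{n-1}:=\{q\in\mathcal{P}_{n-1]}:\langle q,q\rangle=0\}$, hence descends to the finite-dimensional Hilbert space $\mathcal{P}_{n-1]}/\mathcal{N}_{n-1}$. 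The Riesz representation theorem then yields a $p_M\in\mathcal{P}_{n-1]}$ such that $\langle p,M-p_M\rangle=0$ for every $p\in\mathcal{P}_{n-1]}$. Declaring $\mathcal{P}_{n,\varphi}$ to be the linear span of $\{M-p_M:\deg M=n\}$, one checks directly that it is orthogonal to $\mathcal{P}_{n-1]}$, that its basis is a perturbation of the monomial basis of order $n$ (so $\mathcal{P}_{n,\varphi}$ is monic), and that $\mathcal{P}_{n]}=\mathcal{P}_{n-1]}\dot+\mathcal{P}_{n,\varphi}$, which closes the induction.

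For the converse, set $\varphi(a):=\langle 1_\mathcal{P},a\rangle$; this is linear with $\varphi(1_\mathcal{P})=1$. The core identity is $\varphi(a^*b)=\langle a,b\rangle$ for all $a,b\in\mathcal{P}$. By linearity it suffices to take $a$ a monomial $X_{j_1}\cdots X_{j_k}$, and then iterating the symmetry $\langle c,X_jb\rangle=\langle X_jc,b\rangle$ assumed in (jj), together with the commutativity of the $X_j$, gives $\varphi((X_{j_1}\cdots X_{j_k})^*b)=\langle X_{j_1}\cdots X_{j_k},b\rangle$. In particular $\varphi(a^*a)=\langle a,a\rangle\geq 0$, so $\varphi$ is a state, and the identity $\langle\cdot,\cdot\rangle_\varphi=\langle\cdot,\cdot\rangle$ then makes (j) and (jj) match exactly the properties (i)--(iii) for the decomposition (\ref{direct-sum}) relative to $\varphi$.

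The main obstacle, and essentially the only genuinely nontrivial step, is coping with the possible degeneracy of $\langle\cdot,\cdot\rangle$ in the forward direction: without it the argument would reduce to ordinary Gram--Schmidt, but here one must route the construction through the quotient by the radical, which is exactly what the Cauchy--Schwarz/Riesz mechanism secures. Once the corrections $p_M$ are in hand, the orthogonality between different levels, the monicness, and the converse direction reduce to routine bookkeeping with the symmetry of the multiplication operators.
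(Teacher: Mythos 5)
Your argument is correct. Note, however, that the paper itself gives no proof of this lemma: it is quoted verbatim from the reference \cite{AcBaDh} (``We refer the interested reader to \cite{AcBaDh} for more details''), so there is no in-paper proof to compare against. What you propose is the natural argument and almost certainly the one used in that reference: in the forward direction, the essential point is exactly the one you isolate, namely that $\langle\cdot,\cdot\rangle_\varphi$ may be degenerate, so the Gram--Schmidt correction $p_M$ of a degree-$n$ monomial $M$ must be produced via Cauchy--Schwarz (to see that $p\mapsto\langle p,M\rangle$ kills the radical of $\mathcal{P}_{n-1]}$) and Riesz representation on the quotient; the resulting $M-p_M$ span a monic complement of $\mathcal{P}_{n-1]}$ in $\mathcal{P}_{n]}$, and orthogonality of distinct levels follows since lower levels sit inside $\mathcal{P}_{n-1]}$. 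In the converse direction, defining $\varphi(a):=\langle 1_\mathcal{P},a\rangle$ and using the assumed $\langle\cdot,\cdot\rangle$-symmetry of multiplication by the $X_j$ together with their commutativity to get $\varphi(a^*b)=\langle a,b\rangle$, hence positivity of $\varphi$, is exactly what the hypothesis (jj) is designed for. The only point worth making explicit in a written version is that the correction $p_M$ is unique only modulo the radical, which is harmless since the lemma asserts the existence of \emph{a} $\varphi$-orthogonal decomposition rather than its uniqueness.
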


\subsection{The symmetric Jacobi relations and the CAP operators}
In the following we fix a state $\varphi$ on $\mathcal{P}$ and we
follow the notations of Lemma \ref{state} with the exception that we
omit the index $\varphi$. We write $\langle.,.\rangle$ for the
pre-scalar product $\langle.,.\rangle_\varphi$, $\mathcal{P}_k$ for
the space $\mathcal{P}_{k,\varphi}$ and
$P_{k]}:\mathcal{P}\rightarrow \mathcal{P}_{k]}$ the
$\langle.,.\rangle$-orthogonal projector in the pre-Hilbert space
sense (see \cite{AcBaDh} for more details). Put
$$P_n=P_{n]}-P_{n-1]}$$
It is obvious that $P_n=P_n^*$ and $P_nP_m=\delta_{nm}P_n$ for all
$n,m\in\mathbb{N}$.

It is proved in \cite{AcBaDh} that for any $1\leq j\leq d$ and any
$n\in\mathbb{N}$, one has
\begin{eqnarray}\label{Jacobi relation}
X_jP_n=P_{n+1}X_jP_n+P_nX_jP_n+P_{n-1}X_jP_n
\end{eqnarray}
with the convention that $P_{-1]}=0$. The identity (\ref{Jacobi
relation}) is called the {\it symmetric Jacobi relation}.

Now for each $1\leq j\leq d$ and $n\in\mathbb{N}$ we define the
operators $a^\varepsilon_{j|n},$ $\varepsilon\in\{+,0,-\}$, with
respect to a basis $e=(e_j)_{1\leq j\leq d}$ of $\mathbb{C}^d$ as
follows:
\begin{eqnarray}
a^{+}_{j|n}&=&a^+_{e_j|n}:=P_{n+1}X_jP_{n}\Big|_{{{\cal P}}_{n}}
\ : \ {{\cal P}}_{n}\longrightarrow {{\cal P}}_{n+1}\label{1df-a+j|n}\nonumber\\
a^{0}_{j|n}&=&a^0_{e_j|n}:=P_{n}X_jP_{n}\Big|_{{{\cal P}}_{n}}
\ : \ {{\cal P}}_{n}\longrightarrow {{\cal P}}_{n}\label{cre-ann-con}\\
a^{-}_{j|n}&=&a^-_{e_j|n}:=P_{n-1}X_jP_{n}\Big|_{{{\cal P}}_{n}} \ :
\ {{\cal P}}_{n}\longrightarrow {\cal
P}_{n-1}\label{1df-a-j|n}\nonumber
\end{eqnarray}

{\bf Notation}: If $v=(v_1,\dots,v_d)\in\mathbb{C}^d$, where
$v_1,\dots,v_d$ are the coordinates of $v$ in the basis $e$, we
denote
\begin{eqnarray*}\label{notation}
a^{\varepsilon}_{v|n}:=\sum_{1\leq j\leq d}v_ja^{\varepsilon}_{j|n}
\end{eqnarray*}

Note that in this context, the sum
\begin{equation}\label{weak-sum}
{{\cal P}}=\bigoplus_{n\in{\mathbb N}}{{\cal P}}_{n}
\end{equation}
is orthogonal and meant in the weak sense, i.e. for each element
$Q\in {{\cal P}}$ there is a finite set $I\subset{\mathbb N}$ such
that
\begin{equation}\label{finite-decomposition}
    Q=\sum_{n\in I}p_n
\; ,\qquad p_n\in {{\cal P}}_{n}
\end{equation}

\begin{theo}\label{th-Q--dec}
On ${{\cal P}}$, for any $1\leq j\leq d $, the following operators
are well defined
\begin{eqnarray*}\label{1df-a+j}
a^{+}_j&:=&\sum_{n\in{\mathbb N}}a^{+}_{j|n}\\
a^{0}_j&:=&\sum_{n\in{\mathbb N}}a^{0}_{j|n}\\
a^{-}_j&:=&\sum_{n\in{\mathbb N}}a^{-}_{j|n}
\end{eqnarray*}
and one has
\begin{equation}\label{q-dec-Xj}
X_j=a^{+}_j+a^{0}_j+a^{-}_j
\end{equation}
in the sense that both sides of (\ref{q-dec-Xj}) are well defined on
${{\cal P}}$ and the equality holds.
\end{theo}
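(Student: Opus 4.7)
The plan is to reduce everything to the symmetric Jacobi relation (\ref{Jacobi relation}) combined with the observation that the direct sum (\ref{weak-sum}) is algebraic, so that every element of $\Pp$ admits a \emph{finite} homogeneous decomposition (\ref{finite-decomposition}). This finiteness is what makes the \emph{a priori} infinite sums defining $a^{+}_j$, $a^{0}_j$, $a^{-}_j$ meaningful as operators on $\Pp$.

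First I would verify well-definedness. Fix $Q\in\Pp$ and use (\ref{finite-decomposition}) to write $Q=\sum_{n\in I}p_n$ with $I\subset\N$ finite and $p_n\in\Pp_n$. Since each $a^{\varepsilon}_{j|n}$ is defined on $\Pp_n$ and vanishes on $\Pp_m$ for $m\neq n$ (this follows from the factor $P_n$ in its definition together with $P_nP_m=\delta_{nm}P_n$), the formal sum $a^{\varepsilon}_j Q:=\sum_{n\in\N}a^{\varepsilon}_{j|n}Q$ collapses to the finite sum $\sum_{n\in I}a^{\varepsilon}_{j|n}p_n\in\Pp$. Hence $a^{+}_j$, $a^{0}_j$, $a^{-}_j$ are well-defined linear operators on $\Pp$.

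Next I would prove the decomposition (\ref{q-dec-Xj}). By linearity it is enough to evaluate both sides on an arbitrary $p_n\in\Pp_n$. On the right-hand side, the discussion above gives
\begin{equation*}
(a^{+}_j+a^{0}_j+a^{-}_j)p_n=a^{+}_{j|n}p_n+a^{0}_{j|n}p_n+a^{-}_{j|n}p_n=(P_{n+1}X_jP_n+P_nX_jP_n+P_{n-1}X_jP_n)p_n.
\end{equation*}
Since $P_np_n=p_n$, the right-hand side equals $(P_{n+1}+P_n+P_{n-1})X_jp_n$. On the other hand, the symmetric Jacobi relation (\ref{Jacobi relation}), applied to $p_n=P_np_n$, gives exactly
\begin{equation*}
X_jp_n=X_jP_np_n=(P_{n+1}X_jP_n+P_nX_jP_n+P_{n-1}X_jP_n)p_n.
\end{equation*}
The two expressions coincide, so $X_jp_n=(a^{+}_j+a^{0}_j+a^{-}_j)p_n$ for every homogeneous $p_n$, and summing over the finite set $I$ yields $X_jQ=(a^{+}_j+a^{0}_j+a^{-}_j)Q$.

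There is no real obstacle here: the content of the theorem is essentially a repackaging of the Jacobi relation plus the algebraic (as opposed to Hilbert) nature of the gradation (\ref{weak-sum}). The only point that deserves care is to be explicit that the $a^{\varepsilon}_{j|n}$ act by zero outside $\Pp_n$, which is what allows one to interpret the infinite sums pointwise on $\Pp$; the fact that the filtration $\Pp_{n]}=\bigoplus_{h=0}^{n}\Pp_h$ from Lemma \ref{state}(ii) is preserved (degree is raised by at most one under multiplication by $X_j$) guarantees consistency of the decomposition with the orthogonal projections $P_{n\pm1}$, $P_n$ appearing in the definitions of $a^{\pm}_{j|n}$, $a^{0}_{j|n}$.
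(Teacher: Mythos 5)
Your argument is correct: the paper itself states Theorem \ref{th-Q--dec} without proof (it is recalled from \cite{AcBaDh}), and your two steps --- local finiteness of the algebraic gradation (\ref{weak-sum})--(\ref{finite-decomposition}) to make the sums collapse to finite ones, then the symmetric Jacobi relation (\ref{Jacobi relation}) applied to each homogeneous component $p_n=P_np_n$ --- are exactly the intended content. The only point worth making explicit is that each $a^{\varepsilon}_{j|n}$ is defined in (\ref{cre-ann-con}) only as a map on $\mathcal{P}_n$, so one should say that it is extended to $\mathcal{P}$ by zero on the other summands (equivalently, $a^{\varepsilon}_jQ:=\sum_n a^{\varepsilon}_{j|n}P_nQ$); you do note this via the factor $P_n$, so the proof stands.
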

Identity (\ref{q-dec-Xj}) is called a {\it quantum decomposition} of
the variable $X_j$.
\begin{prop}
For any $1\leq j\leq d$ and $n\in\mathbb N$, one has
\begin{eqnarray*}
(a^{+}_{j|n})^{*} &=& a^{-}_{j|n+1}\qquad; \qquad
(a^{+}_{j})^{*} = a^{-}_{j}\label{a+j*=a-j}\\
(a^{0}_{j|n})^{*} &=& a^{0}_{j|n}\qquad; \qquad
(a^0_{j})^*=a^0_{j}\label{a0j*=a0-j}
\end{eqnarray*}
Moreover, for each $j,k\in\{1,\dots,d\}$, one has
$$[a^+_j,a^+_k]=0$$
\end{prop}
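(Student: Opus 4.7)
The plan is to leverage two structural inputs: each $X_j$ is a $\langle\cdot,\cdot\rangle$-symmetric operator on $\mathcal{P}$ (property (jj) of Lemma~\ref{state}), while the projectors $P_n$ are self-adjoint and pairwise orthogonal. For the adjoint identities I would simply unfold the definitions. Writing $(AB)^* = B^*A^*$, one has
$$(a^+_{j|n})^* = (P_{n+1} X_j P_n)^* = P_n X_j^* P_{n+1}^* = P_n X_j P_{n+1},$$
and the right-hand side, viewed as an operator $\mathcal{P}_{n+1} \to \mathcal{P}_n$, is exactly $a^-_{j|n+1}$ by definition; summing over $n$ in the sense of the weak direct sum (\ref{weak-sum}) yields $(a^+_j)^* = a^-_j$. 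The self-adjointness $(a^0_{j|n})^* = a^0_{j|n}$ is entirely analogous, with the middle projector replaced by $P_n$ on both sides.

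For $[a^+_j, a^+_k] = 0$ I would reduce the claim to the block-graded identity
$$a^+_{j|n+1}\, a^+_{k|n} \;=\; a^+_{k|n+1}\, a^+_{j|n}, \qquad n \in \mathbb{N},$$
and then sum in $n$. Using $P_{n+1}^2 = P_{n+1}$ one rewrites the left-hand side as $P_{n+2}\, X_j\, P_{n+1}\, X_k\, P_n$, and the key step is to show that this equals $P_{n+2}\, X_j\, X_k\, P_n$. Applying the symmetric Jacobi relation (\ref{Jacobi relation}) to $X_k P_n$ gives $X_k P_n = P_{n+1} X_k P_n + P_n X_k P_n + P_{n-1} X_k P_n$; since $X_j P_m$ sends $\mathcal{P}_m$ into $\mathcal{P}_{m+1]}$, the projector $P_{n+2}$ annihilates $X_j P_m$ for every $m \leq n$, so only the $P_{n+1}$-summand survives. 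Swapping $j$ and $k$ yields $P_{n+2}\, X_k\, X_j\, P_n = a^+_{k|n+1}\, a^+_{j|n}$, and the commutativity $X_j X_k = X_k X_j$ in the polynomial algebra $\mathcal{P}$ closes the identity.

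The only real obstacle is the degree-counting step that collapses the three Jacobi terms into the single surviving $P_{n+2} X_j P_{n+1} X_k P_n$; the remainder is formal manipulation with the orthogonal projectors and the self-adjointness already in hand. As a bonus, taking adjoints of $[a^+_j, a^+_k]=0$ and using the first part of the proposition would automatically give $[a^-_j, a^-_k]=0$, though that is not part of the statement we are asked to establish.
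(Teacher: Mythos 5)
Your proof is correct: the adjoint identities follow from the self-adjointness of the projectors $P_n$ and the $\langle\cdot,\cdot\rangle$-symmetry of multiplication by $X_j$, and the commutation $[a^+_j,a^+_k]=0$ reduces, exactly as you argue, to the block identity $P_{n+2}X_jP_{n+1}X_kP_n=P_{n+2}X_jX_kP_n$ via the symmetric Jacobi relation together with the degree count $X_j\mathcal{P}_{m]}\subset\mathcal{P}_{m+1]}$ (which kills the $P_n$- and $P_{n-1}$-terms under $P_{n+2}$), after which commutativity of $X_jX_k=X_kX_j$ in $\mathcal{P}$ closes the argument. Note that the paper states this proposition without proof, recalling it from \cite{AcBaDh}, so there is no in-paper argument to compare against; your route is the standard one and is complete.
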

\subsection{$3$-diagonal decompositions of $\mathcal{P}$ and multi-dimensional Favard Lemma}

\begin{defi}\label{df-3d-dec}
For $n\in\mathbb N$ a {\it $3$--diagonal decomposition of} $\mathcal
P_{n]}$
$$
\left\{\left({\cal P}_k \ , \ \langle\cdot,\cdot\rangle_{k}
\right)^{n}_{k=0} \ , \ \left(a^{+}_{\cdot|k}\right)^{n-1}_{k=0} \ ,
\ \left(a^{0}_{\cdot|k}\right)^{n}_{k=0}\right\}
$$
is defined by:
\begin{enumerate}
\item[(i)] a vector space direct sum decomposition of
$\mathcal P_{n]}$ such that
\begin{equation}\label{orth-dec-Pk]}
\mathcal P_{k]} = \sum_{h\in\{0,\dots,k\}}^{\cdot} \mathcal P_{h}
\qquad;\qquad \forall k\in\{0,1,\cdots , n\}
\end{equation}
where each $\mathcal{P}_k$ is monic.
\item[(ii)] for each $k\in\{0,1,\cdots , n\}$ a pre-scalar product
$\langle \ \cdot \ , \ \cdot \ \rangle_{k} $ on $\mathcal P_{k}$.
\item[(iii)] two families of linear maps
\begin{eqnarray*}
v\in \mathbb C^d &\longmapsto&a^+_{v|k}\in\mathcal L(\mathcal P_{k}  ,  \mathcal P_{k+1})\qquad ,\qquad k\in\{0,1,\cdots , n-1\}\label{a+v|k}\\
v\in \mathbb C^d &\longmapsto&a^0_{v|k}\in\mathcal L(\mathcal P_{k},
\mathcal P_{k}) \qquad ,\qquad k\in\{0,1,\cdots , n\}
\end{eqnarray*}
\end{enumerate}
such that:
\begin{enumerate}
\item[-] for all $v\in \mathbb R^d $, $a^+_{v|k}$ maps the $(\mathcal{P}_k,\langle.,.\rangle_{k})$-zero norm subspace into the $(\mathcal{P}_{k+1},\langle.,.\rangle_{k+1})$-zero norm subspace;
\item[-] for all $v\in \mathbb R^d $, $a^0_{v|k}$ is a
self-adjoint operator on the pre-Hilbert space $(\mathcal P_{k} \ ,
\ \langle \ \cdot  ,  \cdot \ \rangle_{k})$, thus in particular it
maps $(\mathcal{P}_k,\langle\ \cdot,\cdot\ \rangle_{k})$-zero norm
subspace into itself;
\item[-] denoting $*$ (when no confusion is possible)
the adjoint of a linear map from $\left(\mathcal P_{k-1} \ , \
\langle \ \cdot , \cdot \ \rangle_{k-1}\right)$ to  $\left(\mathcal
P_{k} \ , \ \langle \ \cdot  ,  \cdot \ \rangle_{k}\right)$ for any
$k\in\{0,1,\cdots , n\}$, and defining
\begin{eqnarray*}\label{df-a-v|k}
a^-_{v|k} := (a^+_{v|k-1})^*  \;; \qquad a^+_{v|-1} :=0 \;;\qquad
k\in\{0,1,\cdots , n-1\} \ , \ v\in \mathbb C^d
\end{eqnarray*}
the following identity is satisfied:
\begin{eqnarray*}\label{df-a+v|k1}
X_v\Big|_{\mathcal P_{k}} = a^+_{v|k} + a^{0}_{v|k} + a^-_{v|k}
\qquad; \qquad k\in\{0,1,\cdots , n-1\} \ , \ v\in\mathbb R^d
\end{eqnarray*}
\end{enumerate}
\end{defi}

{\bf Remarks}: For the following remarks we refer to \cite{AcBaDh}.
\begin{enumerate}
\item[(i)] Any 3-diagonal decomposition of $\mathcal{P}_{n]}$ induces, by restriction, a 3-diagonal decomposition of $\mathcal{P}_{k]}$ for any $k\leq n$.
\item[(ii)] By definition
$$\mathcal{P}_n:=\{a^+_{v|n}(\mathcal{P}_{n-1});\:\:v\in\mathbb{C}^d\}$$
\end{enumerate}
\begin{theo}
The $3$-diagonal decompositions of ${\mathcal P}$ are in one-to-one
correspondence with the pre-scalar products on $\mathcal{P}$ induced
by some state $\varphi$ on $\mathcal{P}$.
\end{theo}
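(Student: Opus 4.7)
The plan is to establish the bijection by invoking Lemma \ref{state}, which already gives a one-to-one correspondence between states $\varphi$ on $\mathcal{P}$ and orthogonal polynomial decompositions satisfying conditions (i)--(iii) of that lemma. It therefore suffices to build natural mutually-inverse maps between $\varphi$-orthogonal polynomial decompositions and $3$-diagonal decompositions of $\mathcal{P}$, and then transport the correspondence along them.

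For the forward direction, starting from a state $\varphi$ with its gradation $\mathcal{P}=\bigoplus_n\mathcal{P}_{n,\varphi}$ and pre-scalar products $\langle\cdot,\cdot\rangle_{n,\varphi}$, one associates the $3$-diagonal decomposition consisting of the same monic subspaces $\mathcal{P}_n$, the same pre-scalar products, and the CAP operators $a^+_{v|n},a^0_{v|n},a^-_{v|n}$ defined in (\ref{cre-ann-con}). The identity $X_v|_{\mathcal{P}_n}=a^+_{v|n}+a^0_{v|n}+a^-_{v|n}$ is then the symmetric Jacobi relation (\ref{Jacobi relation}), while the adjointness $a^-_{v|n}=(a^+_{v|n-1})^*$ and the self-adjointness of $a^0_{v|n}$ for real $v$ are the content of the proposition preceding this theorem. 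Compatibility with the zero-norm subspaces follows because the orthogonal projectors $P_n$ are symmetric and send zero-norm elements to zero-norm elements.

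Conversely, starting from an abstract $3$-diagonal decomposition, one defines a pre-scalar product $\langle\cdot,\cdot\rangle$ on $\mathcal{P}$ as the orthogonal direct sum of the given $\langle\cdot,\cdot\rangle_k$, which is well defined thanks to the weak-sum convention (\ref{finite-decomposition}). Condition (j) of Lemma \ref{state} holds because each $\mathcal{P}_k$ is monic and $\mathcal{P}_0=\mathbb{C}\cdot 1_\mathcal{P}$; for condition (jj), the norm of $1_\mathcal{P}$ is forced to be $1$ by the degree-zero normalisation, and the crucial $\langle\cdot,\cdot\rangle$-symmetry of multiplication by each $X_j$ is read off from the identity $X_v=a^+_v+a^0_v+a^-_v$ for $v\in\mathbb{R}^d$ combined with the relation $a^-_{v|k}=(a^+_{v|k-1})^*$ and the self-adjointness of $a^0_{v|k}$ built into Definition \ref{df-3d-dec}. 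The converse half of Lemma \ref{state} then yields a state $\varphi$ whose $\varphi$-orthogonal polynomial decomposition coincides with the given $3$-diagonal decomposition.

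The main obstacle is checking that these two constructions are mutually inverse in a canonical way, despite the pre-scalar products being potentially degenerate. Extracting the $3$-diagonal decomposition from a state and then rebuilding a state gives back the original one by the uniqueness part of Lemma \ref{state}; in the reverse direction, the CAP operators obtained from the state-induced gradation agree with the original ones because the formulas $a^0_{v|k}=P_kX_vP_k|_{\mathcal{P}_k}$ and $a^-_{v|k}=(a^+_{v|k-1})^*$ leave no freedom once the gradation and the pre-scalar products are fixed. The delicate point throughout is that the zero-norm subspace conditions following Definition \ref{df-3d-dec} must be preserved at every step, so that all constructions descend unambiguously to the Hilbert-space quotient of each $(\mathcal{P}_k,\langle\cdot,\cdot\rangle_k)$.
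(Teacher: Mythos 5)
The paper itself contains no proof of this theorem: it is stated bare and, like everything else in Section~2, is imported from the reference \cite{AcBaDh}, so there is no in-paper argument to compare yours against. Your reconstruction via Lemma~\ref{state} is the natural route and is essentially sound. The forward direction is exactly the symmetric Jacobi relation (\ref{Jacobi relation}) restricted to $\mathcal{P}_n$, together with the adjoint identities $(a^{+}_{j|n})^{*}=a^{-}_{j|n+1}$ and $(a^{0}_{j|n})^{*}=a^{0}_{j|n}$ from the proposition following Theorem~\ref{th-Q--dec}; your Cauchy--Schwarz-type justification of the zero-norm compatibility is correct. The converse correctly reduces the $\langle\cdot,\cdot\rangle$-symmetry of multiplication by $X_j$ to the relations $a^-_{v|k}=(a^+_{v|k-1})^*$ and the self-adjointness of $a^0_{v|k}$ packaged into Definition~\ref{df-3d-dec}, after which the converse half of Lemma~\ref{state} produces the state.

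There are two places where you lean on more than the paper literally supplies. First, you appeal to ``the uniqueness part of Lemma~\ref{state}'', but that lemma as stated only asserts \emph{existence} of a $\varphi$-orthogonal polynomial decomposition; when $\langle\cdot,\cdot\rangle_\varphi$ is degenerate, a complement of $\mathcal{P}_{n-1]}$ in $\mathcal{P}_{n]}$ is not unique, and the single-valuedness of the map from pre-scalar products to $3$-diagonal decompositions is precisely the delicate point you flag in your last paragraph without closing it. The paper's device for this is the canonical projector $P_{k]}$ ``in the pre-Hilbert space sense'', again deferred to \cite{AcBaDh}; to make your argument self-contained you would need to state and use that canonicity explicitly. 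Second, Definition~\ref{df-3d-dec} does not actually normalise the pre-scalar product on $\mathcal{P}_0=\mathbb{C}1_\mathcal{P}$, so $\langle 1_\mathcal{P},1_\mathcal{P}\rangle_0=1$ is not ``forced'' as you claim: it must be imposed as part of the data, or else the correspondence only holds up to a positive scalar. Neither issue is a wrong turn --- both are spots where a referee would ask you either to quote the precise statements from \cite{AcBaDh} or to supply the missing half-page.
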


In the following $\otimes$ will denote the algebraic tensor product
and $\hat{\otimes}$ its symmetrization. The tensor algebra over
$\mathbb{C}^d$ is the vector space
$$\mathcal{T}(\mathbb{C}^d):=\sum^{\cdot}_{n\in\mathbb{N}}(\mathbb{C}^d)^{\otimes n}$$
with multiplication given by
$$(u_n\otimes\dots\otimes u_1)\otimes(v_n\otimes\dots\otimes v_1):=u_n\otimes\dots\otimes u_1\otimes v_n\otimes\dots\otimes v_1$$
for all $n,m\in\mathbb{N}$ and all $u_j,v_j\in\mathbb{C}^d$. The
$*$-sub-algebra of $\mathcal{T}(\mathbb{C}^d)$ generated by the
elements of the form
$$v^{\otimes n}:=v\otimes\dots\otimes v\:(n-times),\:\forall n\in\mathbb{N},\;\forall v\in\mathbb{C}^d$$
is called the {\it symmetric tensor algebra} over $\mathbb{C}^d$ and
denoted $\mathcal{T}_{sym}(\mathbb{C}^d)$.

\begin{lem}\label{id-symm-tens}
For all $n\in\mathbb{N}^*$, let ${\cal P}_{n}$ be the $n-th$ space
of a $3$-diagonal decomposition of $\mathcal{P}$. Denoting, for
$v\in\mathbb{C}^d$, $a^+_v:=\sum_{n\in\mathbb{N}}a^+_{v|k}$ and
$\Phi=1_\mathcal{P}$. Then the map
\begin{equation}\label{symm-tens-vn-a+vn}
 U_n:\;v_n\hat\otimes v_{n-1}\hat\otimes\cdots \hat\otimes v_{1}
 \in (\mathbb C^d)^{\hat \otimes n} \ \longmapsto \
a^+_{v_n}a^+_{v_{n-1}} \cdots a^+_{v_{1}}\Phi \in {\cal P}_{n},
\end{equation}
extends uniquely to a vector space isomorphism with the property
that for all $v\in\mathbb{C}^d$ and
$\xi_{n-1}\in(\mathbb{C}^d)^{\hat \otimes (n-1)}$
$$U_n(v\hat\otimes\xi_{n-1})=a^+_{v}U_{n-1}\xi_{n-1}$$

For $n=0$ we put
$$U_0:z\in\mathbb{C}:=(\mathbb{C}^d)^{\hat\otimes0}\longmapsto U_0(z):=z\in\mathbb{C}1_\mathcal{P}\in\mathcal{P}_0$$
\end{lem}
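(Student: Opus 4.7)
The plan is to construct $U_n$ via the universal property of the symmetric tensor product, verify that it takes values in $\mathcal P_n$, and then show it is an isomorphism by combining an inductive surjectivity argument with a dimension count.

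First I would observe that the assignment
$(v_1,\dots,v_n)\mapsto a^+_{v_n}a^+_{v_{n-1}}\cdots a^+_{v_1}\Phi$
is multilinear in $(v_1,\dots,v_n)$ and, thanks to the pairwise commutation $[a^+_j,a^+_k]=0$ of the graded creators (the proposition at the end of Subsection~2.2), symmetric under permutations of the $v_i$'s. By the universal property of $(\mathbb C^d)^{\hat\otimes n}$, it therefore factors uniquely through the symmetric tensor product and yields a linear map $U_n:(\mathbb C^d)^{\hat\otimes n}\to\mathcal P_n$; the image lies in $\mathcal P_n$ because each $a^+_{v|k}$ raises the grading by one and $\Phi\in\mathcal P_0$. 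On pure symmetric tensors the intertwining identity $U_n(v\hat\otimes\xi_{n-1})=a^+_vU_{n-1}(\xi_{n-1})$ is then immediate from the definition of $U_n$ and $U_{n-1}$, and it extends by linearity to all $\xi_{n-1}\in(\mathbb C^d)^{\hat\otimes(n-1)}$.

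Surjectivity I would prove by induction on $n$, the case $n=0$ being the trivial identification $z\leftrightarrow z\cdot\Phi$. For the inductive step, Remark~(ii) following Definition~\ref{df-3d-dec} states that $\mathcal P_n$ is spanned by the vectors $a^+_{v|n-1}(p)$ as $v$ runs over $\mathbb C^d$ and $p$ over $\mathcal P_{n-1}$. Writing $p=U_{n-1}(\xi_{n-1})$ by the inductive hypothesis and applying the intertwining relation just established represents every such element as $U_n(v\hat\otimes\xi_{n-1})$, so $U_n$ is onto $\mathcal P_n$.

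For injectivity I would use a dimension count. The monicity condition defining $\mathcal P_n$ gives
$\dim\mathcal P_n=\dim\mathcal P_{n]}-\dim\mathcal P_{n-1]}$, which equals the number of monomials of degree exactly $n$ in the $d$ commuting indeterminates $X_1,\dots,X_d$, namely $\binom{n+d-1}{n}$; this is also the well-known dimension of $(\mathbb C^d)^{\hat\otimes n}$. A surjection between finite-dimensional spaces of equal dimension is automatically bijective, so $U_n$ is an isomorphism. The only genuinely delicate point in the plan is Step~1, where the symmetry of the map under permutations of the $v_i$'s must be extracted from the commutation $[a^+_j,a^+_k]=0$; the remaining steps are essentially bookkeeping with the graded structure.
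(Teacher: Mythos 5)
The paper does not actually prove Lemma \ref{id-symm-tens}: like the rest of the Favard machinery of Section 2, it is quoted from \cite{AcBaDh}, so there is no in-paper argument to compare yours against. Judged on its own terms, your proposal is correct and complete. The three steps fit together properly: well-definedness on $(\mathbb{C}^d)^{\hat\otimes n}$ follows from the multilinearity of $v\mapsto a^+_v$ together with $[a^+_u,a^+_v]=0$, which you rightly reduce by bilinearity to the stated commutation $[a^+_j,a^+_k]=0$; surjectivity follows by induction from Remark (ii) after Definition \ref{df-3d-dec}, read (as intended, despite the index misprint there) as saying that $\mathcal{P}_n$ is the linear span of the vectors $a^+_{v|n-1}p$ with $p\in\mathcal{P}_{n-1}$; and injectivity follows from the count $\dim\mathcal{P}_n=\dim\mathcal{P}_{n]}-\dim\mathcal{P}_{n-1]}$, which by monicity and the linear independence of monomials in the free commutative algebra $\mathcal{P}$ equals the number of degree-$n$ monomials in $d$ commuting variables, i.e.\ $\dim(\mathbb{C}^d)^{\hat\otimes n}$; this count is insensitive to any degeneracy of the pre-scalar product, which is why the conclusion is (correctly) only a vector-space isomorphism, the unitarity statement being deferred to Theorem \ref{multi}. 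The only caveat is that your two external inputs, the commutation relation and the spanning remark, are themselves only quoted in this paper from \cite{AcBaDh}; both, however, are short consequences of the symmetric Jacobi relation (\ref{Jacobi relation}), so the argument is sound.
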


The multi-dimensional Favard Lemma is given by the following
theorem.
\begin{theo}\label{multi}
Let $\mu$ be a probability measure on ${\mathbb R}^d$ with finite
moments of all orders and denote $\varphi$ the state on
$\mathcal{P}$ given by
$$\varphi(b)=\int_{\mathbb{R}^d}b(x_1,\dots,x_d)d\mu(x_1,\dots,x_d),\:b\in\mathcal{P}$$
Then there exist two sequences
$$
(\Omega_n)_{n\in{\mathbb N}}\qquad ;\qquad
(\alpha_{.|n})_{n\in{\mathbb N}}
$$
satisfying:
\begin{enumerate}
\item[(i)] for all $n\in{\mathbb N}, \Omega_n$ is a linear operator on $(\mathbb{C}^d)^{\hat\otimes n}$ positive and symmetric with respect to the tensor scalar product given by
$$
\langle u^{\otimes n},v^{\otimes
m}\rangle_{(\mathbb{C}^d)^{\hat{\otimes}n}}:=\delta_{m,n}\langle
u,v\rangle_{\mathbb{C}^d}^n;\:\: \forall u,v\in \mathbb{C}^d;
\forall n\in{\mathbb N}
$$
where $\langle\ \cdot,\cdot\ \rangle_{\mathbb{C}^d}$ is a pre-scalar
product on $\mathbb{C}^d$.
\item[(ii)] denoting for all $n\in\mathbb{N}$
\begin{equation}\label{6.1}
\langle\xi_n,\eta_n\rangle_n:=\langle\xi_n,\Omega_n\eta_n\rangle_{({\mathbb
C}^d)^{\hat{\otimes}n}};\:\:\:\xi_n,\eta_n\in(\mathbb{C}^d)^{\hat\otimes
n}
\end{equation}
the pre-scalar product on $(\mathbb{C}^d)^{\hat{\otimes}n}$ defined
by $\Omega_n$ and $| \ \cdot,\cdot \ |_n$ the associated pre-norm.
For all $n\in\mathbb{N},v\in\mathbb{C}^d$ and
$\eta_{n-1}\in(\mathbb{C}^d)^{\hat{\otimes}(n-1)}$, one has
\begin{equation}\label{*0}
|\eta_{n-1}|_{n-1}=0\Rightarrow |v\hat\otimes \eta_{n-1}|_n=0
\end{equation}
\item[(iii)] for all $n\in{\mathbb N},$
$$\alpha_{.|n} \ : \ v\in{\mathbb C}^d \ \to \ \alpha_{v|n}\in\mathcal{L}\Big((\mathbb{C}^d)^{\hat{\otimes}n}\Big)
$$
is a linear map and for all $v\in{\mathbb R}^d$, $\alpha_{v|n}$ is a
linear operator on $({\mathbb C}^d)^{\hat{\otimes}n}$, symmetric for
the pre-scalar product $\langle \ \cdot  ,  \cdot \ \rangle_n$ on
$({\mathbb C}^d)^{\hat{\otimes}n}$;
\item[(iv)] the sequence $\Omega_n$ defines a symmetric interacting Fock space struture over $\mathbb{C}^d$ endowed with the tensor pre-scalar product (\ref{6.1}) and the operator
\begin{equation}\label{gradation-preserving}
\!\!\!\!\!\!\!\!\!\!\!\!\!U:=\bigoplus_{k\in\mathbb{N}}U_k:\bigoplus_{k\in\mathbb{N}}\left(
({\mathbb C}^d)^{\hat{\otimes}k} ,  \ \langle  \cdot  , \cdot
\rangle_k\right)\rightarrow\bigoplus_{k\in\mathbb{N}}\left(\mathcal{P}_k
,  \ \langle  \cdot  , \cdot
\rangle_{\mathcal{P}_k}\right)=\left(\mathcal{P},\ \langle  \cdot  ,
\cdot \rangle\right)
\end{equation}
is an orthogonal gradation preserving unitary isomorphism of
pre-Hilbert spaces, where $\langle  \cdot  , \cdot
\rangle_{\mathcal{P}_k}$ is the pre-scalar product induced by
$\varphi$ on $\mathcal{P}_k$.
\end{enumerate}
Moreover, denoting
\begin{eqnarray}\label{FAv-IFS}
\Gamma\left({\mathbb C}^d,\;(\Omega_n)_n\right)
:=\bigoplus_{n\in{\mathbb N}}\left( ({\mathbb C}^d)^{\hat{\otimes}n}
\ ,  \ \langle \ \cdot  ,  \cdot \ \rangle_n\right)
\end{eqnarray}
the symmetric interacting Fock space defined by the sequence
$(\Omega_n)_{n\in{\mathbb N}}$, $A^\pm$ the creation and
annihilation fields associated to it, $P_{\Gamma,n}$ the projection
onto the $n-th$ space of the gradation (\ref{FAv-IFS}), and $N$ the
number operator associated to this gradation i.e.
$$N:=\sum_{n\in\mathbb{N}}nP_{\Gamma,n},$$
the gradation preserving unitary pre-Hilbert space isomorphism
(\ref{gradation-preserving}) satisfies
\begin{eqnarray*}
U\Phi&=&1_\mathcal{P}\\
U^{-1}X_vU&=&A^{+}_v+\alpha_{v,N}+A^{-}_v,\:\:\forall
v\in\mathbb{R}^d,
\end{eqnarray*}
where $\alpha_{v,N}$ is the symmetric operator defined by
$$\alpha_{v,N}:=\sum_{n\in\mathbb{N}}\alpha_{v|n}P_{\Gamma,n}.$$

Conversely, given two sequences $(\Omega_n)_{n\in{\mathbb N}}$ and
$(\alpha_{.|n})_{n\in{\mathbb N}}$ satisfying (i), (ii), (iii) and
(iv) above, there exists a state $\varphi$ on $\mathcal{P}$, such
that for any probability measure $\mu$ on ${\mathbb R}^d$, inducing
the state $\varphi$ on $\mathcal{P}$, the pair of
sequences\\$\left((\Omega_n)_{n\in\mathbb{N}},\:(\alpha_{.|n})_{n\in\mathbb{N}}\right)$
is the one associated to $\mu$ according to the first part of the
theorem.
\end{theo}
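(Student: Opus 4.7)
The plan is to transport the orthogonal polynomial structure on $\mathcal{P}$ obtained from $\mu$ via Lemma \ref{state} onto the symmetric tensor algebra through the canonical maps $U_n$ of Lemma \ref{id-symm-tens}, and to read off $(\Omega_n)_n$ and $(\alpha_{\cdot|n})_n$ as the pull-backs of the pre-scalar product on $\mathcal{P}_n$ and of the preservation operators $a^{0}_{v|n}$, respectively.

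For the forward direction I would first set $\varphi(b):=\int b\,d\mu$ and apply Lemma \ref{state} to produce the orthogonal polynomial decomposition $\mathcal{P}=\bigoplus_n\mathcal{P}_n$; the $3$-diagonal decomposition recalled in Section~2.2 then furnishes the CAP operators $a^{+}_{v|n},a^{0}_{v|n},a^{-}_{v|n}$ and the quantum decomposition (\ref{q-dec-Xj}). Lemma \ref{id-symm-tens} provides a vector space bijection $U_n:(\mathbb{C}^d)^{\hat\otimes n}\to\mathcal{P}_n$ intertwining $a^{+}_{v}$ with symmetric tensoring by $v$. I would define
\[
\langle\xi_n,\eta_n\rangle_n:=\langle U_n\xi_n,U_n\eta_n\rangle_{\mathcal{P}_n}
\]
and introduce $\Omega_n$ as the unique linear operator on $(\mathbb{C}^d)^{\hat\otimes n}$ representing this pre-scalar product through the tensor one, i.e.\ obeying (\ref{6.1}). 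Positivity and symmetry of $\Omega_n$ are automatic since $\langle\cdot,\cdot\rangle_n$ is a pre-scalar product, and the implication (\ref{*0}) is inherited from the property in Definition \ref{df-3d-dec} that $a^{+}_{v|n-1}$ sends the $(\mathcal{P}_{n-1},\langle\cdot,\cdot\rangle_{n-1})$-zero norm subspace into the $(\mathcal{P}_n,\langle\cdot,\cdot\rangle_n)$-zero norm subspace, transported via $U_{n-1}$ and $U_n$.

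Next I would set $\alpha_{v|n}:=U_n^{-1}a^{0}_{v|n}U_n$; linearity in $v$ is immediate, and symmetry of $\alpha_{v|n}$ for $v\in\mathbb{R}^d$ is inherited from the self-adjointness of $a^{0}_{v|n}$ on $(\mathcal{P}_n,\langle\cdot,\cdot\rangle_{\mathcal{P}_n})$. Assembling $U:=\bigoplus_n U_n$, the definition of $U_n$ already ensures $U_n^{-1}a^{+}_{v|n}U_n=A^{+}_v$ on the $n$-th tensor level, so the identity $U^{-1}X_vU=A^{+}_v+\alpha_{v,N}+A^{-}_v$ follows term by term from (\ref{q-dec-Xj}) after taking adjoints to identify $a^{-}_{v|n}$ with $A^{-}_v$ relative to $\langle\cdot,\cdot\rangle_n$. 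Properties (i)--(iv) are then read off directly from these constructions.

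For the converse, given $(\Omega_n)_n$ and $(\alpha_{\cdot|n})_n$ subject to (i)--(iv), I would build the symmetric interacting Fock space (\ref{FAv-IFS}), define the operators $A^{+}_v+\alpha_{v,N}+A^{-}_v$ on each graded level, and transport this structure through $U$ to a $3$-diagonal decomposition of $\mathcal{P}$; the second half of Lemma \ref{state} then produces a state $\varphi$ whose associated pre-scalar products coincide with the prescribed ones, and any probability measure $\mu$ on $\mathbb{R}^d$ inducing $\varphi$ is recovered with its canonical sequences by the uniqueness embedded in Lemma \ref{state}. The main obstacle I expect is the consistent bookkeeping on zero-norm subspaces: one must check that (\ref{*0}) really makes the creation and preservation operators descend to the pre-Hilbert quotients, that the adjoint relations used to identify $a^{-}_{v|n}$ with $A^{-}_v$ are meaningful at the level of those quotients, and that the converse construction reproduces \emph{exactly} the prescribed sequences, not merely some gauge-equivalent pair.
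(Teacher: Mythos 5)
The first thing to note is that the paper does not actually prove Theorem \ref{multi}: the theorem is imported from \cite{AcBaDh} (Section~2 opens by referring the reader there, and the Remark following the theorem cites ``the proof of the above theorem (cf \cite{AcBaDh})''). So there is no in-paper argument to measure your proposal against; one can only check it against the structural data the paper does record. Your forward direction is consistent with that data: pulling back $\langle\cdot,\cdot\rangle_{\mathcal{P}_n}$ along $U_n$, letting $\Omega_n$ be the operator representing the resulting form against the tensor scalar product, and setting $\alpha_{v|n}:=U_n^{-1}a^{0}_{v|n}U_n$ is exactly what the paper asserts in (\ref{alphan}), and the intertwining $U_n(v\hat\otimes\xi_{n-1})=a^{+}_vU_{n-1}\xi_{n-1}$ built into Lemma \ref{id-symm-tens} does convert the quantum decomposition (\ref{q-dec-Xj}) into $U^{-1}X_vU=A^{+}_v+\alpha_{v,N}+A^{-}_v$, with the adjoint identification for $A^{-}_v$ legitimate because $U$ is unitary for the transported pre-scalar products.

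The converse half of your sketch, however, has a genuine gap. You propose to ``transport this structure through $U$'' to a $3$-diagonal decomposition of $\mathcal{P}$, but $U$ is defined in terms of the operators $a^{+}_{v|n}$ of a $3$-diagonal decomposition, which in the converse direction is precisely what you do not yet possess; as written the argument is circular. The standard route is to define candidate coordinate operators $\tilde X_j:=A^{+}_j+\alpha_{e_j,N}+A^{-}_j$ directly on $\Gamma(\mathbb{C}^d,(\Omega_n)_n)$ and to set $\varphi(p):=\langle\Phi,p(\tilde X_1,\dots,\tilde X_d)\Phi\rangle$. For this to define a state on the \emph{commutative} $*$-algebra $\mathcal{P}$ one must prove $[\tilde X_i,\tilde X_j]=0$, i.e.\ extract the mutual commutation of the candidate multiplication operators from hypotheses (i)--(iv) (this is where the ``symmetric interacting Fock space structure'' condition in (iv) does real work); that verification is the heart of the multi-dimensional Favard Lemma in $d>1$ and is entirely absent from your sketch. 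One must then still check that the orthogonal polynomial gradation of the resulting $\varphi$ coincides with the Fock gradation, so that the pair $(\Omega_n,\alpha_{\cdot|n})$ produced by the forward direction is exactly the prescribed one --- a point you correctly flag as an ``obstacle'' but do not resolve.
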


{\bf Remark}:
\begin{enumerate}
\item[1)] From the proof of the above theorem (cf \cite{AcBaDh}) one has
\begin{equation}\label{alphan}
\alpha_{.|n}=U_n^{-1}a^0_{.|n}U_n
\end{equation}
\item[2)] from (\ref{*0}), it follows that if there exists $n_0\in{\N}^*$ such that $\Omega_{n_0}=0$, then $$\Omega_n=0, \ \ \forall n\geq n_0.$$
\end{enumerate}
\begin{defi}
The sequences $(\Omega_n)_n$ and $(\alpha_{.|n})_n$ in Theorem
\ref{multi} are called Jacobi sequences associated to the
probability measure $\mu$.
\end{defi}


\section{Positive Jacobi sequence and atomic probability measure}

Recall that for each $n\in{\N}$ the positive matrix
$\Omega_n\in\mathcal{L}(({\C}^d)^{\widehat{\otimes}n})$.
\begin{prop} If there exists $n_0\in{\N}$ such that $rank(\Omega_{n_0})<dim\Big(({\C^d})^{\widehat{\otimes}n_0}\Big)$, then for all
$k\in{\N},rank(\Omega_{n_0+k})<dim\Big(({\C^d})^{\widehat{\otimes}n_0+k}\Big)$.
\end{prop}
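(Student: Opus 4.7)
My plan is to translate rank deficiency of $\Omega_n$ into the existence of a nonzero vector of zero pre-norm, propagate such vectors up one degree at a time via hypothesis (\ref{*0}), and induct.

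First I would unpack the hypothesis. Since $\Omega_{n_0}$ is positive and symmetric for the tensor pre-scalar product, the fact that its rank is strictly less than $\dim((\C^d)^{\hat\otimes n_0})$ is equivalent to the existence of a nonzero $\eta \in (\C^d)^{\hat\otimes n_0}$ with $\Omega_{n_0}\eta = 0$, which by (\ref{6.1}) is the same as $|\eta|_{n_0}=0$. So we have a nonzero vector in the zero-norm subspace at level $n_0$.

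Next I would apply property (ii) of Theorem \ref{multi}, namely the implication (\ref{*0}), which says exactly that $|\eta|_{n_0}=0$ forces $|v\hat\otimes\eta|_{n_0+1}=0$ for every $v\in\C^d$. The one point that needs justification is that $v\hat\otimes\eta$ is not the zero vector of $(\C^d)^{\hat\otimes(n_0+1)}$ for a suitable choice of $v$. Here I would invoke the canonical identification of $\bigoplus_n (\C^d)^{\hat\otimes n}=\mathcal{T}_{sym}(\C^d)$ with the commutative polynomial algebra $\C[X_1,\dots,X_d]$, under which $v\hat\otimes\eta$ corresponds to the product of the linear polynomial $X_v$ and the homogeneous polynomial corresponding to $\eta$. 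Since $\C[X_1,\dots,X_d]$ is an integral domain and $\eta\ne 0$, taking $v=e_j$ for any $j$ (for instance) yields $v\hat\otimes\eta\ne 0$.

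Combining these two facts, $v\hat\otimes\eta$ is a nonzero element of $(\C^d)^{\hat\otimes(n_0+1)}$ lying in the kernel of $\Omega_{n_0+1}$, so $\text{rank}(\Omega_{n_0+1})<\dim((\C^d)^{\hat\otimes(n_0+1)})$. An immediate induction on $k$ then gives the conclusion for every $k\in\N$: at each step the nonzero zero-norm vector at level $n_0+k$ produces, via the same argument, a nonzero zero-norm vector at level $n_0+k+1$.

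I do not expect a genuine obstacle; the only point requiring care is the non-vanishing of $v\hat\otimes\eta$ in the symmetric tensor space, which is immediate from the integral domain property of the polynomial algebra (equivalently, from Lemma \ref{id-symm-tens} together with the fact that $a^+_v$ is injective on nonzero polynomials in degree $n$ when one forgets the pre-scalar product structure, since it coincides with multiplication by $X_v$ in $\mathcal{P}$).
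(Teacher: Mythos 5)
Your proof is correct and follows essentially the same route as the paper: both arguments propagate a nonzero kernel vector of $\Omega_{n_0}$ upward by tensoring with a nonzero $v\in\C^d$ and then induct on $k$. The only differences are cosmetic — the paper re-derives the propagation step by the explicit Cauchy--Schwarz computation with $U_n$ and $a^{\pm}_v$ (in effect reproving (\ref{*0}) in situ) where you simply cite (\ref{*0}), and you additionally justify the nonvanishing of $v\hat\otimes\eta$ via the integral-domain property of the polynomial algebra, a point the paper asserts without comment.
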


\begin{proof} Suppose that there exists $n_0\in{\N}$ such that
 rank $(\Omega_{n_0})<dim\Big(({\C^d})^{\widehat{\otimes}n_0}\Big)$
i.e. $\Omega_{n_0}$ is not injective. Let
$\xi_{n_0}\in({\C}^d)^{\widehat{\otimes}n_0},\xi_{n_0}\neq0_{({\C}^d)^{\widehat{\otimes}n_0}}$
such that $\Omega_{n_0}(\xi_{n_0})=0$, then for all $v\in{\C}^d$,
for all arbitrary $\eta_{n_0+1}\in({\C}^d)^{\widehat{\otimes}n_0+1}$
one has
\begin{eqnarray*}
\langle\eta_{n_0+1},\Omega_{n_0+1}(v\widehat{\otimes}\xi_{n_0})\rangle_{({\C^d})^{\widehat{\otimes}n_0+1}}
&=&\langle U_{n_0+1}(\eta_{n_0+1}),U_{n_0+1}(v\widehat{\otimes}\xi_{n_0})\rangle_{{\Pp}_{n_0+1}}\\
&=&\langle U_{n_0+1}(\eta_{n_0+1}),a^+_v U_{n_0}(\xi_{n_0})\rangle_{{\Pp}_{n_0+1}}\\
&=&\langle a^-_v U_{n_0+1}(\eta_{n_0+1}), U_{n_0}(\xi_{n_0})\rangle_{{\Pp}_{n_0}}\\
&\leq& |a^-_v
U_{n_0+1}(\eta_{n_0+1})|_{{\Pp}_{n_0}}|U_{n_0}(\xi_{n_0})|_{{\Pp}_{n_0}}
\end{eqnarray*}
Because $\Omega_{n_0}(\xi_{n_0})=0$ i.e. $\langle
\xi_{n_0},\Omega_{n_0}(\xi_{n_0})\rangle_{({\C}^d)\widehat{\otimes}n_0}=|U_{n_0}(\xi_{n_0})|^2_{{\Pp}_{n_0}}=0$,
one has
$$\Omega_{n_0+1}(v\widehat{\otimes}\xi_{n_0})=0, \ \ \forall
v\in{\C}^d$$ It follows by induction on $k\in{\N}$
$$\Omega_{n_0+k}(v^{\widehat{\otimes}k}\widehat{\otimes}\xi_{n_0})=0, \ \ \forall
v\in{\C^d}, \forall k\in{\N}$$ If $v\neq0_{{\C}^d}$, because
$\xi_{n_0}\neq0_{({\C}^d)^{\widehat{\otimes}n_0}}$, one gets
$$v^{\widehat{\otimes}k}\widehat{\otimes}\xi_{n_0}\subset Ker(\Omega_{n_0+k}), \ \ \forall
k\in{\N}$$ and
$$v^{\widehat{\otimes}k}\widehat{\otimes}\xi_{n_0}\neq0_{({\C}^d)^{\widehat{\otimes}n_0+k}}$$
Hence, $\Omega_{n_0+k} \ \ (k\in{\N})$ is not injective i.e.
$rank(\Omega_{n_0+k})<dim\Big(({\C^d})^{\widehat{\otimes}n_0+k}\Big)$.
\end{proof}

Now, our aim is to give a characterization of the atomic probability
measure on ${\R}^d$ which have a finite number of atoms.

A common zero of a set of polynomials is a zero for every polynomial
in the set. Let $\mu$ be a probability measure on ${\R}^{d}.$ Let
$\mathbb{P}_{n}=\left\{P_{\alpha}^{n}\right\}_{\alpha}$ be a
sequence of orthogonal polynomials with respect to $\mu$, where
$\alpha =(\alpha_{1},\alpha_{2},\dots,\alpha_{d})\in{\N}^{d}$ and
$|\alpha|=\alpha_{1}+\alpha_{2}+\dots+\alpha_{d}=n$. A common zero
of $\mathbb{P}_{n}$ is a zeros of every $P_{\alpha}^{n}.$ Clearly we
can consider zeros of $\mathbb{P}_{n}$ as  zeros of the subspace
${\Pp}_{n}.$ For the following lemma we refer the reader to
\cite{G.F.D.}.

\begin{lem}\label{l1} The polynomials in  $\mathbb{P}_{n}$ have at
most $dim{\Pp}_{n-1]}$ common zeros.
\end{lem}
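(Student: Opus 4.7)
The plan is to argue by contradiction: suppose the common zeros of $\mathbb{P}_n$ include distinct points $x_1,\ldots,x_N\in\mathbb{R}^d$ with $N>\dim\mathcal{P}_{n-1]}$, and produce a nontrivial linear functional of the form $F:=\sum_i c_i\delta_{x_i}$ that annihilates every polynomial, contradicting the fact that the $x_i$ are distinct points of $\mathbb{R}^d$.

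First I would do a dimension count. The evaluation map $p\in\mathcal{P}_{n-1]}\mapsto(p(x_1),\ldots,p(x_N))\in\mathbb{C}^N$ has image of dimension at most $\dim\mathcal{P}_{n-1]}<N$, so its cokernel is nontrivial and furnishes scalars $c_1,\ldots,c_N$, not all zero, with $\sum_ic_ip(x_i)=0$ for every $p\in\mathcal{P}_{n-1]}$. The associated functional $F$ therefore vanishes on $\mathcal{P}_{n-1]}$ by construction. Since each $x_i$ is a common zero of $\mathcal{P}_n$, any $q\in\mathcal{P}_n$ satisfies $q(x_i)=0$, so $F$ also vanishes on $\mathcal{P}_n$, and more generally on the entire ideal $I:=\mathcal{P}_n\cdot\mathcal{P}$ generated by $\mathcal{P}_n$.

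The key algebraic step, which I expect to be the only nonroutine part, is the identity $\mathcal{P}=\mathcal{P}_{n-1]}+I$. I would prove it by induction on the total degree. The case of degrees $\leq n$ reduces to the monic decomposition $\mathcal{P}_{n]}=\mathcal{P}_{n-1]}\dot+\mathcal{P}_n$; for the inductive step, any monomial $M$ of degree $k+1>n$ factors as $X_jM'$ with $M'\in\mathcal{P}_{k]}$, so by the induction hypothesis $M'=p'+f'$ with $p'\in\mathcal{P}_{n-1]}$ and $f'\in I$, and then $M=X_jp'+X_jf'$ splits into a piece $X_jp'\in\mathcal{P}_{n]}$ (handled by the base case) and a piece $X_jf'$ which remains in $I$ since $I$ is an ideal. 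Combined with the previous paragraph, this shows $F\equiv 0$ on all of $\mathcal{P}$.

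Finally, since $x_1,\ldots,x_N$ are distinct points of $\mathbb{R}^d$, for each $k$ one exhibits a polynomial $f_k\in\mathcal{P}$ with $f_k(x_i)=\delta_{ki}$, for instance $f_k(y)=\prod_{j\neq k}\|y-x_j\|^2/\|x_k-x_j\|^2$. Applying $F$ to $f_k$ yields $c_k=0$ for every $k$, contradicting the nontriviality of $(c_1,\ldots,c_N)$ and completing the proof.
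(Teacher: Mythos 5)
The paper does not actually prove Lemma \ref{l1}; it defers entirely to \cite{G.F.D.}, so there is no internal proof to measure yours against. Your argument is correct and self-contained. The dimension count producing a nonzero functional $F=\sum_i c_i\delta_{x_i}$ that annihilates $\mathcal{P}_{n-1]}$ is sound; $F$ annihilates the ideal $I$ generated by $\mathcal{P}_n$ because every generator vanishes at every $x_i$; your key identity $\mathcal{P}=\mathcal{P}_{n-1]}+I$ follows by the stated induction on degree from the monic splitting $\mathcal{P}_{n]}=\mathcal{P}_{n-1]}\dot+\mathcal{P}_n$ guaranteed by Lemma \ref{state}, since multiplication by a coordinate raises degree by exactly one and $I$ is an ideal; and the separating polynomials $f_k(y)=\prod_{j\neq k}\|y-x_j\|_2^2/\|x_k-x_j\|_2^2$ are legitimate precisely because the common zeros under consideration are distinct points of $\mathbb{R}^d$ (for complex points the squared Euclidean norm could vanish off the diagonal, so the restriction to real zeros matters and you respected it). Your route is the dual of the usual textbook one: the standard argument shows the evaluation map $\mathcal{P}_{n-1]}\to\mathbb{C}^N$ at the common zeros is surjective, by reducing an arbitrary Lagrange interpolant modulo $I$ down into $\mathcal{P}_{n-1]}$, whereas you show that any functional killing its image must be zero; both hinge on the same decomposition $\mathcal{P}=\mathcal{P}_{n-1]}+I$, and either yields $N\leq\dim\mathcal{P}_{n-1]}$. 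No gaps.
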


\begin{defi} Given a measurable space $(X,\Sigma)$ and a measure $\mu$ on that space, a set A in $\Sigma$ is called an atom if
$\mu(A)>0$ and for any measurable subset B of A with
$\mu(A)>\mu(B)$, one has $\mu(B)=0$.
\end{defi}
\begin{defi} Given a measurable space $(X,\Sigma)$ and a measure $\mu$ on that
space. $\mu$ is said an atomic if there is a partition of $X$ into
countably many elements of $\Sigma$ which are either atoms or null
sets.
\end{defi}
\begin{rmq} If $\mu$ is a $\sigma$-finite probability measure on the Borel $\sigma-$algebra of
${\R}^{n}$, then it is easy to show that, for any atom $B$ of $\mu$
there is a point $x\in{B}$ with the property that
$\mu(B)=\mu(\left\{x\right\})$. Thus such a measure is atomic if and
only if it is the countable sum of Dirac deltas, i.e. if there is an
(at most) countable set $\left\{x_{i}\right\}\subset{\R}^{n}$ and an
(at most) countable set $\left\{a_{i}\right\}\subset]0,\infty[$ with
the property that
$$\mu(A)=\sum_{x_{i}\in{A}} a_{i} ~~for ~every ~Borel ~set ~A.$$
i.e. $\mu=\sum_{i}a_{i}\delta_{x_{i}},$ with $\sum_{i}a_{i}=1.$
\end{rmq}

\begin{theo}
There exists $n_{0}\in {\N^{*}}$ such that the matrices $\Omega_{n}$
are zero for $n\geq n_{0}$ of and only if the associated probability
measure is atomic and having a finite number of atoms.
\end{theo}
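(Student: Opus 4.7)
The strategy rests on the pre-Hilbert isomorphism $U_n:(\mathbb{C}^d)^{\hat\otimes n}\to\mathcal{P}_n$ from Lemma \ref{id-symm-tens} together with the formula (\ref{6.1}), which together show that $\Omega_n=0$ if and only if $\langle\cdot,\cdot\rangle_n\equiv 0$ on $\mathcal{P}_n$, i.e.\ every element of $\mathcal{P}_n$ is $\mu$-null. I will translate both directions of the theorem into statements about the null subspace $\mathcal{N}:=\{P\in\mathcal{P}:\int|P|^2\,d\mu=0\}$ of $\mathcal{P}$ inside $L^2(\mu)$.

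For the ``if'' direction, assume $\mu=\sum_{i=1}^N a_i\delta_{x_i}$ has exactly $N$ atoms. The evaluation map $P\mapsto(P(x_1),\ldots,P(x_N))$ identifies $\mathcal{P}/\mathcal{N}$ with a subspace of $\mathbb{C}^N$, so $\dim(\mathcal{P}/\mathcal{N})\leq N$. Since the decomposition $\mathcal{P}=\bigoplus_n\mathcal{P}_n$ is $\langle\cdot,\cdot\rangle$-orthogonal, any polynomial $Q=\sum_n Q_n$ with $Q_n\in\mathcal{P}_n$ satisfies $\langle Q,Q\rangle=\sum_n\langle Q_n,Q_n\rangle_n$, so $Q\in\mathcal{N}$ iff each $Q_n\in\mathcal{N}$. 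Hence $\bigoplus_n\mathcal{P}_n/(\mathcal{P}_n\cap\mathcal{N})$ embeds in $\mathcal{P}/\mathcal{N}$, which yields $\sum_n\dim\bigl(\mathcal{P}_n/(\mathcal{P}_n\cap\mathcal{N})\bigr)\leq N<\infty$. Only finitely many indices $n$ can satisfy $\mathcal{P}_n\not\subset\mathcal{N}$, so choosing $n_0$ strictly larger than all of them gives $\mathcal{P}_{n_0}\subset\mathcal{N}$, equivalently $\Omega_{n_0}=0$; Remark 2) after Theorem \ref{multi} then propagates the vanishing to all $n\geq n_0$.

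For the ``only if'' direction, assume $\Omega_{n_0}=0$ for some $n_0\in\mathbb{N}^*$. Then every $P\in\mathcal{P}_{n_0}$ satisfies $\int|P|^2\,d\mu=0$, hence $P\equiv 0$ on $\mathrm{supp}(\mu)$ by continuity. Fixing any basis $\mathbb{P}_{n_0}=\{P_\alpha^{n_0}\}_{|\alpha|=n_0}$ of $\mathcal{P}_{n_0}$, the support of $\mu$ is contained in the common zero set of $\mathbb{P}_{n_0}$, which by Lemma \ref{l1} has cardinality at most $\dim\mathcal{P}_{n_0-1]}$. Thus $\mathrm{supp}(\mu)$ is finite, and the remark preceding the theorem identifies $\mu$ with a finite convex combination of Dirac masses at those points. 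No serious obstacle is expected; the step requiring the most care is the translation between ``$\Omega_n=0$'' and ``$\mathcal{P}_n\subset\mathcal{N}$'', which is immediate from the pre-Hilbert isomorphism $U_n$ and the defining identity (\ref{6.1}).
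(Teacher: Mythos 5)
Your proof is correct, and it splits naturally into a half that mirrors the paper and a half that does not. For the direction $\Omega_{n_0}=0\Rightarrow\mu$ atomic with finitely many atoms, you and the paper argue the same way: $\Omega_{n_0}=0$ forces every element of $\mathcal{P}_{n_0}$ to be $\mu$-null, and Lemma \ref{l1} bounds the common zero set of a basis of $\mathcal{P}_{n_0}$ by $\dim\mathcal{P}_{n_0-1]}$; the paper phrases this through the full-measure sets $\Delta_\alpha$ and their intersection $\mathcal{D}_{n_0}$, while you pass through $\mathrm{supp}(\mu)$ via continuity of polynomials — a cosmetic difference. The converse is where you genuinely diverge. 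The paper fixes an orthogonal basis $(P_{k,h})$ of $\mathcal{P}_{n]}$, encodes each polynomial by its evaluation vector $(P(a_1),\dots,P(a_n))\in\R^n$ with the $\alpha_i$-weighted scalar product, and runs a two-case analysis: either degrees $0,\dots,n-1$ supply $n$ nonzero pairwise orthogonal vectors, forcing every degree-$n$ evaluation vector (being orthogonal to all of them) to vanish, or some intermediate degree $k_0$ already has all evaluation vectors zero, giving $\Omega_{k_0}=0$. Your argument replaces this with a single dimension count: the evaluation map identifies $\mathcal{P}/\mathcal{N}$ with a subspace of $\C^N$, orthogonality of the gradation gives $Q\in\mathcal{N}$ iff each graded component is in $\mathcal{N}$, hence $\sum_n\dim\bigl(\mathcal{P}_n/(\mathcal{P}_n\cap\mathcal{N})\bigr)\le N$ and all but finitely many $\mathcal{P}_n$ are null. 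This is cleaner — no choice of basis, no case split — and isolates the only real input ($\dim(\mathcal{P}/\mathcal{N})\le N$). What the paper's more explicit route buys is a concrete bound on where the vanishing starts (case 1 shows $\Omega_n=0$ at $n$ equal to the number of atoms); your soft count also yields $n_0\le N$ once combined with the propagation remark, but you would need to say so explicitly.
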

\begin{proof}
Let $\mu$ a probability measure with finite moments of any order and
suppose that there exists $n_{0}\in {\N^{*}}$ such that the matrices
$\Omega_{n_{0}}=0$. It follows that for all
$\xi_{n_{0}}\in({\C}^{d})^{\widehat{\otimes} n_{0}}$
$$0=\langle\xi_{n_{0}},\Omega_{n_{0}}\xi_{n_{0}}\rangle_{({\C}^{d})^{\widehat{\otimes} n_{0}}}=
\langle U_{n_{0}}\xi_{n_{0}},U_{n_{0}}\xi_{n_{0}}\rangle_{\mu}.$$
Since $U_{n_{0}}\in Isom(({\C}^{d})^{\widehat{\otimes}
n_{0}},{\Pp}_{n_{0}})$ and $\Omega_{n_{0}}=0$, then one has
$$\langle Q_{1},Q_{2}\rangle_{\mu}=0,\:\: \forall
Q_{1},Q_{2}\in{\Pp}_{n_{0}}$$
 It follows that
$$\int_{{\R}^{d}}|Q(x)|^{2}\mu(dx)=0,~~\forall Q\in{\Pp}_{n_{0}}.$$
Thus for all $Q\in {\Pp}_{n_{0}}$, one has
$$Q=0~\mu.a.s.~~i.e.~~\mu(\left\{x\in{\R}^{d};~~Q(x)=0\right\})=1.$$
Let
$\mathbb{P}_{n_{0}}=\left\{P_{\alpha}^{n_{0}}\right\}_{|\alpha|=n_0}$
be an orthogonal basis of ${\Pp}_{n_{0}}$. Put
$$\Delta_{\alpha}=\left\{x\in{\R}^{d};~~P_{\alpha}^{n_{0}}(x)=0\right\}$$
and $$\mathcal{D}_{n_{0}}=\cap_{|\alpha|=n_{0}}\Delta_{\alpha}.$$ It
is clear that for any $\alpha$ such that $|\alpha|=n_{0}$,
$\mu(\Delta_\alpha)=1$. Moreover, one has
$\mu(\mathcal{D}_{n_{0}})=1$ because
$$\mu(\mathcal{D}_{n_{0}}^{c})=\mu(\cup_{|\alpha|=n_{0}}\Delta_{\alpha}^{c})\leq\sum_{|\alpha|=n_{0}}\mu(\Delta_{\alpha}^{c})=0.$$
Thus, one gets
$$\mathcal{D}_{n_{0}}\neq\emptyset.$$
Moreover, from Lemma \ref{l1},  $\mathcal{D}_{n_{0}}$ is a finite
set of ${\R}^{d}$. Therefore, $\mathcal{D}_{n_{0}}$ is of the form
$\left\{x_{i}\right\}_{i\in I}$ with $I$ is a finite set. Clearly,
one has $\mu=\sum_{i\in I}a_{i}\delta_{x_{i}}$ with $\sum_{i\in
I}a_{i}=1,$ where
$a_{i}=\mu(\left\{x_{i}\right\}),~~i\in{I}.$\\

Conversely,$~~~$suppose that
$\mu=\sum_{i=1}^{n}\alpha_{i}\delta_{a_{i}}$, where
$a_{i}\in{\R}^{d},\sum_{i=1}^{n}\alpha_{i}=1,\alpha_{i}>0,i=1,2,\dots,n$
and $a_{i}\neq a_{j}$ for all $i\neq j$. Put
$$\Lambda_{n}=\left\{k\in{\N};~~1\leq k\leq\left(
                                             \begin{array}{c}
                                               n+d-1 \\
                                              d-1 \\
                                             \end{array}
                                           \right)
\right\}.$$ Let $$\Big(P_{k,h}\Big)_{\begin{array}{c}
                              0\leq k\leq n \\
                             h\in{\Lambda_{n}}
                            \end{array}
}$$ be an orthogonal basis of ${\Pp}_{n]}$ with respect to the
pre-scalar product on ${\Pp}$ induced by $\mu$: $$\langle
P,Q\rangle_{\mu}=\sum_{i=1}^{n}\alpha_{i}\overline{P}(a_{i})Q(a_{i}),~~P,Q\in{\Pp},$$
with this notation, each $P_{k,h}$ is a polynomial of degree $k$.\\

Now, define the scalar product on ${\R}^{n}$ as follows
$$\Big\langle\left(
               \begin{array}{c}
                 v_{1} \\
                 \vdots \\
                 v_{n} \\
               \end{array}
             \right),\left(
               \begin{array}{c}
                 w_{1} \\
                 \vdots \\
                w_{n} \\
               \end{array}
             \right)
 \Big\rangle=\sum_{i=1}^{n}\alpha_{i}\overline{v}_{i}w_{i}.$$
Therefore, one has
\begin{equation}\label{eqq0}
0=\Big\langle
P_{k_{1},h_{1}},P_{k_{2},h_{2}}\Big\rangle_{\mu}=\Big\langle\left(
               \begin{array}{c}
                 P_{k_{1},h_{1}}(a_{1}) \\
                 \vdots \\
                P_{k_{1},h_{1}}(a_{n}) \\
               \end{array}
             \right),\left(
               \begin{array}{c}
                 P_{k_{2},h_{2}}(a_{1})\\
                 \vdots \\
                P_{k_{2},h_{2}}(a_{n}) \\
               \end{array}
             \right)
 \Big\rangle.
\end{equation}

 for all $0\leq k_{1},k_{2}\leq n$ and all $h_{1}\neq
 h_{2}$ with $h_{1}\in\Lambda_{k_{1}}$ and
 $h_{2}\in\Lambda_{k_{2}}.$\\
\begin{enumerate}
\item[-] First case : if for all $k\in\left\{0,1,\dots,n-1\right\}$
there exists $l_{k}\in\Lambda_{k}$ such that\\
$P_{k,l_{k}}(a_{i_{k}})\neq0$ for some
$i_{k}\in\left\{1,\dots,n\right\}.$ Then, one has
$$\left(\begin{array}{c}
                  P_{k,l_{k}}(a_{1}) \\
                 \vdots\\
                  P_{k,l_{k}}(a_{i_{k}})\\
                 \vdots \\
                  P_{k,l_{k}}(a_{n})\\
               \end{array}
             \right)
            \neq\left(\begin{array}{c}
                 0 \\
               \vdots \\
                 0\\
                 \vdots \\
                 0\\
               \end{array}
             \right).$$

Consider, now the family
$\mathfrak{F}=\left\{P_{0,l_{0}},\dots,P_{n-1,l_{n-1}}\right\}.$ It
is clear that $card (\mathfrak{F})=n$. Put
$$v_{m}=\left(\begin{array}{c}
                  P_{m,l_{m}}(a_{1}) \\
                 \vdots\\
                  P_{m,l_{m}}(a_{i_{m}})\\
                 \vdots \\
                  P_{m,l_{m}}(a_{n})\\
               \end{array}
             \right),~~m\in\left\{0,1,\dots,n-1\right\}.$$
It is clear that $v_{m}\neq0_{{\R}^{n}}$ and $$\langle
v_{m},v_{r}\rangle=0,~~\forall m\neq r.$$Note that
$$\left(\begin{array}{c}
                  P_{n,h}(a_{1}) \\
                 \vdots\\
                  P_{n,h}(a_{n})\\
               \end{array}
             \right)\in{\R}^{n},~~~\forall h\in\Lambda_{n}.$$
Moreover, from (\ref{eqq0}) for all $h\in{\Lambda_{n}}$, one has
$$\Big\langle\left(\begin{array}{c}
                  P_{n,h}(a_{1}) \\
                 \vdots\\
                  P_{n,h}(a_{n})\\
               \end{array}
             \right),v_{j} \Big\rangle=0~~\forall j\in\left\{0,1,\dots,n-1\right\}.$$
It follows that
           $\left(\begin{array}{c}
                  P_{n,h}(a_{1}) \\
                 \vdots\\
                  P_{n,h}(a_{n})\\
               \end{array}
             \right)=\left(\begin{array}{c}
                 0\\
                 \vdots \\
                 0\\
               \end{array}
             \right),~~~\forall h\in{\Lambda_{n}}.$\\
This gives
$$ P_{n,h}(a_{i})=0,~\forall h\in{\Lambda_{n}},~\forall
i\in\left\{1.2,\dots,n\right\}.$$ Therefore, one gets
$$\langle~P_{n,h},P_{n,l}\rangle=0,~~\forall
h,l\in{\Lambda_{n}}.$$ which proves that
$\Omega_{n}=0$  and therefore $\Omega_{m}=0,$ for all $m\geq n$.\\
\item[-] Second case : if there exists
$k_{0}\in\left\{0,1,\dots,n-1\right\}$, such that for all
$l\in{\Lambda_{k_{0}}}$ $$P_{k_{0},l}(a_{i})=0,~~i=1,\dots,n.$$
then, one has
$$\langle~P_{k_{0},h},P_{k_{0},l}\rangle=0,~~\forall
h,l\in{\Lambda_{k_{0}}},$$ which implies that $$\Omega_{k_{0}}=0.$$
\end{enumerate} and therefore $$\Omega_k=0,~~\forall k\geq k_{0}.$$
\end{proof}

\section{Multi-variable orthogonal polynomials }

In the following our purpose is to give the explicit forms of the
Jacobi sequences $(\alpha_{.|n},\Omega_{n})_{n}$ in the case of
Hermite, Laguerre and Jacobi  polynomials.\\

Define the binaire relation ${\Rr}$ on
$\left\{1,2,\dots,d\right\}^{n}$ by
$$(i_{1},i_{2},\dots,i_{n}){\Rr}(j_{1},j_{2},\dots,j_{n})$$ if and only if
$$\left\{i_{1},i_{2},\dots,i_{n}\right\}=\left\{j_{1},j_{2},\dots,j_{n}\right\}$$
and
$$\sharp\Big(\left\{i_{k}=l,~k=1,2,\dots,n\right\}\Big)=\sharp\Big(\left\{j_{k}=l,~k=1,2,\dots,n\right\}\Big)$$
for all $l\in\left\{1,2,\dots,d\right\}.$ ${\Rr}$ is an equivalence
relation on $\left\{1,2,\dots,d\right\}^{n}$ (cf\cite{D.Ab.D.Am.}
for more details). For all $1\leq l\leq d.$ Put
\begin{eqnarray*}
m_{l}&=&\sharp(\left\{i_{k}=l,~k=1,2,\dots,n\right\}).\\
n_{l}&=&\sharp(\left\{j_{k}=l,~k=1,2,\dots,n\right\}).\\
{\Aa}_{n}:&=&\left\{\overline{j}_{n}=cl\Big((j_{1},j_{2},\dots,j_{n})\Big).\right\}\\
e_{\overline{j}_{n}}:&=&e_{j_{1}}\widehat{\otimes}e_{j_{2}}\widehat{\otimes}\dots\widehat{\otimes}e_{j_{n}}.
\end{eqnarray*} where $(e_{i})_{1\leq i\leq d}$ is the canonical basis of
${\C}^{d}.$ It is clear that
$\mathcal{B}=(e_{\overline{j}_{n}})_{\overline{j}_{n}\in{\Aa}_n}$ is
a basis of $({\C}^d)^{\widehat{\otimes} n}$. Moreover, in this basis
the positive definite Jacobi sequence is of form
$\Omega_{n}=(\lambda_{\overline{i}_{n},\overline{j}_{n}})_{\overline{i}_{n},\overline{j}_{n}\in{\Aa}_{n}}.$

\subsection{Basic Notations} Let us introduce the following
notations :
\begin{enumerate}
\item[(1)] If $\beta=(\beta_1,\beta_2,\dots,\beta_d)\in{\N}^d,$
we denote for all $r_1,r_2,\dots,r_d\in{\Z}$
\begin{eqnarray*}
\beta_{r_1,r_2,\dots,r_d}&=&(\beta_1+r_1,\beta_2+r_2,\dots,\beta_d+r_d)\\
\beta_{0,0,\dots,0}&=&\beta\\
\widetilde{0}_{r_1,r_2,\dots,r_d}&=&(0+r_1,0+r_2,\dots,0+r_d)\\
\widetilde{0}&=&0_{{\R}^d}
\end{eqnarray*}
\item[(2)] If $\beta=(\beta_1,\beta_2,\dots,\beta_d)\in{\N}^d$ and
$x=(x_1,x_2,\dots,x_d)\in{\R}^d$, we denote
\begin{eqnarray*}
|\beta|&=&\beta_1+\beta_2+\dots+\beta_d\\
\beta!&=&\beta_1!\beta_2!\dots\beta_d!\\
x^{\beta}&=&x_1^{\beta_1}x_2^{\beta_2}\dots x_d^{\beta_d}\\
|x|_1&=&|x_1|+|x_2|+\dots+|x_d|\\
\|x\|_2&=&\sqrt{x_1^2+x_2^2+\dots+x_d^2}
\end{eqnarray*}
\end{enumerate}


\subsection{Multiple Hermite polynomials on ${\R}^{d}$}
The multiple Hermite polynomials on ${\R}^{d}$ defined by
$H_{\alpha}=H_{\alpha_{1}}\otimes H_{\alpha_{2}}\otimes\dots\otimes
H_{\alpha_{d}}$ with
$\alpha=(\alpha_{1},\alpha_{2},\dots,\alpha_{d})\in{{\N}^{d}};~|\alpha|=\alpha_{1}+\alpha_{2}+\dots+\alpha_{d}=n$
and for any $i\in\left\{1,2,\dots ,d\right\},~H_{\alpha_{i}}$ is the
classical Hermite polynomial of one variable. For the following
relation we refer the reader to \cite{G.F.D.}.
\begin{equation}\label{eq1}
\frac{d}{dx_{i}}H_{\alpha_{i}}(x_{i})=2\alpha_{i}H_{\alpha_{i}-1}(x_{i}).
\end{equation}
\begin{equation}\label{eq2}
x_{i}H_{\alpha_{i}}(x_{i})=\frac{1}{2}H_{\alpha_{i}+1}(x_{i})+\alpha_{i}H_{\alpha_{i}-1}(x_{i}).
\end{equation}
$$\|H_{\alpha_{i}}\|^{2}=2^{\alpha_{i}}\alpha_{i}!\sqrt{\pi}.$$
It is clear that the multiple Hermite polynomials on ${\R}^{d}$ are
orthogonal with respect to the classical weight function
$$W^{H}(x)=e^{-\|x\|_{_2}^{2}},~~x\in{\R}^{d}.$$

Moreover, the family $(H_{\alpha})_{|\alpha|=n}$ is an orthogonal
basis of ${\Pp}_{n}$ with respect to $\mu$, where $\mu$ is the
measure of density $W^{H}$ with respect to the Lebesgue measure on
${\R}^d$. For
$\alpha=(\alpha_{1},\alpha_{2},\dots,\alpha_{d})\in{{\N}^{d}};~|\alpha|=n$,
one has
\begin{equation}\label{eq3}
H_{\alpha}(x)=H_{\alpha_{1}}(x_{1})H_{\alpha_{2}}(x_{2})\dots
H_{\alpha_{d}}(x_{d}),~\forall
x=(x_{1},x_{2},\dots,x_{d})\in{\R}^{d}
\end{equation} Multiplying both sides in (\ref{eq3}) by $x_{i}$ and
using (\ref{eq2}) one gets
$$x_{i}H_{\alpha}(x)=\frac{1}{2}H_{(\alpha_{1},..,\alpha_{i-1},\alpha_{i}+1,\alpha_{i+1},\dots,\alpha_{d})}(x)+\alpha_{i}
H_{(\alpha_{1},\dots,\alpha_{i-1},\alpha_{i}-1,\alpha_{i+1},\dots,\alpha_{d})}(x).$$
From the above notations, it follows that
\begin{equation}\label{eq4}
X_{i}H_{\alpha}=\frac{1}{2}H_{\alpha_{_{0,\dots,0,1,0,\dots,0}}}+\alpha_{i}
H_{\alpha_{_{0,\dots,0,-1,0,\dots,0}}}
\end{equation}
where $1$ and $-1$ are in the $i-th$ index. Note that
\begin{equation}\label{eqnh}
\|H_{\alpha}\|^{2}=\prod_{i=1}^{d}\|H_{\alpha_{i}}\|^{2}=2^{n}\pi^{\frac{d}{2}}\alpha!
\end{equation}

Now, consider the orthogonal projector from $\mathcal{P}$ to
$\mathcal{P}_{n}$ given by
\begin{eqnarray*}
P_{n}:&=&\sum_{|\alpha|=n}\frac{1}{\|H_\alpha\|^2}|H_{\alpha}\rangle\langle H_{\alpha}|, \ \ n\in{\N} \\
P_{-1}:&=&0.
\end{eqnarray*}
For $i\in\left\{1,2,\dots,d\right\}$, define the CAP operators as
follows :
\begin{eqnarray*}
a^{+}_{i|n}:&=&P_{n+1}X_{i}P_{n}\\
&=&\sum_{|\beta|=n+1,|\alpha|=n}\frac{1}{\|H_\alpha\|^2\|H_\beta\|^2}|H_{\beta}\rangle\langle H_{\beta}|X_{i}|H_{\alpha}\rangle\langle H_{\alpha}|\\
&=&\sum_{|\beta|=n+1,|\alpha|=n}\frac{1}{\|H_\alpha\|^2\|H_\beta\|^2}\langle
H_{\beta},X_{i}H_{\alpha}\rangle_{\mu} |H_{\beta}\rangle\langle H_{\alpha}|.\\
a^{0}_{i|n}:&=&P_{n}X_{i}P_{n}\\&=&\sum_{|\beta|=n,|\alpha|=n}\frac{1}{\|H_\alpha\|^2\|H_\beta\|^2}\langle
H_{\beta}, X_{i}H_{\alpha}\rangle_{\mu} |H_{\beta}\rangle\langle
H_{\alpha}|.\\
a^{-}_{i|n}:&=&P_{n-1}X_{i}P_{n}, \ \ n\geq1  \\
&=&\sum_{|\beta|=n-1,|\alpha|=n}\frac{1}{\|H_\alpha\|^2\|H_\beta\|^2}\langle
H_{\beta}, X_{i}H_{\alpha}\rangle_{\mu} |H_{\beta}\rangle\langle
H_{\alpha}|.
\end{eqnarray*}
From (\ref{eq4}), it follows that
\begin{eqnarray*}
a^{+}_{i|n}&=&\frac{1}{2}\sum_{|\alpha|=n}\frac{1}{\|H_\alpha\|^2}|H_{\alpha_{_{0,\dots,0,1,0,\dots,0}}}\rangle\langle H_{\alpha}|\\
a^{0}_{i|n}&=&0\\
a^{-}_{i|n}&=&\sum_{|\alpha|=n}\frac{\alpha_i}{\|H_\alpha\|^2}|H_{\alpha_{_{0,\dots,0,-1,0,\dots,0}}}\rangle\langle
H_{\alpha}|,~~n\geq1~~~~(a^-_{i|0}:=0).
\end{eqnarray*}
Then, for all $\alpha,\beta\in{\N}^{d}$ such that $|\alpha|=n$ and
$|\beta|=n+1,$ one has
\begin{equation}\label{eq5}
a^{+}_{i|n}H_{\alpha}=\frac{1}{2}H_{\alpha_{_{0,\dots,0,1,0,\dots,0}}}.
\end{equation}
\begin{equation}\label{eq6}
a^{-}_{i|n+1}H_{\beta}=\beta_iH_{\beta_{_{0,\dots,0,-1,0,\dots,0}}}.
\end{equation}
where $1$ and $-1$ are in the $i-th$ index. Now, for all
$k\in\left\{1,2,\dots,d\right\},$ put
$$a^{+}_{k}=\sum_{n\in{\N}}a^{+}_{k|n}.$$
\begin{lem}\label{lh1} For all $1\leq k\leq d,m\in{\N}^{*}$ and
$\alpha=(\alpha_{1},\alpha_{2},\dots,\alpha_{d})$ such that
$|\alpha|=n,$ one has :
\begin{equation}\label{eql2}
(a^+_k)^mH_\alpha=\Big(\frac{1}{2}\Big)^mH_{\alpha_{0,\dots,0,m,0,\dots,0}}
\end{equation}
where
\begin{eqnarray*}
\alpha_{0,\dots,0,m,0,\dots,0}&=&(\alpha_{1},\dots,\alpha_{k-1},(\alpha_{k}+m),\alpha_{k+1},\dots,\alpha_{d}).
\end{eqnarray*}
\end{lem}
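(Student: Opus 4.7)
The plan is a straightforward induction on $m$, using (\ref{eq5}) as the base case.

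First I note that since $a_k^+=\sum_{n\in\mathbb{N}}a_{k|n}^+$ and each $H_\alpha$ with $|\alpha|=n$ lies in $\mathcal{P}_n$, the action of $a_k^+$ on $H_\alpha$ reduces to $a_{k|n}^+H_\alpha$; moreover, $a_{k|n}^+H_\alpha\in\mathcal{P}_{n+1}$, so the next application of $a_k^+$ reduces to $a_{k|n+1}^+$, and so on. This ``telescoping of the layer index'' is what makes the induction go through cleanly.

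For the base case $m=1$, the identity (\ref{eql2}) is exactly (\ref{eq5}). For the inductive step, assume the formula holds for some $m\in\mathbb{N}^*$. Writing $\alpha_{0,\dots,0,m,0,\dots,0}=(\alpha_1,\dots,\alpha_{k-1},\alpha_k+m,\alpha_{k+1},\dots,\alpha_d)$, this multi-index has total degree $n+m$, so $H_{\alpha_{0,\dots,0,m,0,\dots,0}}\in\mathcal{P}_{n+m}$. Applying $a_k^+=a_{k|n+m}^+$ on this layer, and invoking (\ref{eq5}) once more with the shifted multi-index, gives
\begin{eqnarray*}
(a_k^+)^{m+1}H_\alpha
&=&a_k^+\left(\Big(\tfrac{1}{2}\Big)^m H_{\alpha_{0,\dots,0,m,0,\dots,0}}\right)\\
&=&\Big(\tfrac{1}{2}\Big)^m\cdot\tfrac{1}{2}\,H_{\alpha_{0,\dots,0,m+1,0,\dots,0}}
\;=\;\Big(\tfrac{1}{2}\Big)^{m+1}H_{\alpha_{0,\dots,0,m+1,0,\dots,0}},
\end{eqnarray*}
which closes the induction.

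There is no real obstacle here: the only point that deserves mild care is bookkeeping the layer index, i.e.\ making sure that when we iterate $a_k^+$ we apply the correct piece $a_{k|n+j}^+$ at step $j$, and that the shifted multi-index still points to the $k$-th coordinate. Both facts are immediate from the definitions of $a_k^+$ and of the shift notation $\alpha_{0,\dots,0,r,0,\dots,0}$ introduced in Subsection~4.1.
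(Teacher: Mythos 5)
Your proof is correct and follows exactly the paper's argument: induction on $m$ with base case given by (\ref{eq5}) and the inductive step obtained by applying $a_k^+$ once more to the shifted multi-index. The extra remark about tracking the layer index $n+j$ is a harmless (and slightly more careful) elaboration of what the paper leaves implicit.
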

\begin{proof} We prove the above lemma by induction on $m\in{\N}^*.$
\begin{enumerate}
\item[-] For $m=1,$ one has $$a^{+}_{k}H_{\alpha}=\frac{1}{2}H_{\alpha_{0,\dots,0,1,0,\dots,0}}$$
\item[-] Let $m\geq1$ and suppose that (\ref{eql2}) holds true.
Then, one has
\begin{eqnarray*}
(a^{+}_{k})^{m+1}H_{\alpha}&=& a^{+}_{k}(a^{+}_{k})^{m}H_{\alpha}\\
&=&\Big(\frac{1}{2}\Big)^ma_k^+H_{\alpha_{0,\dots,0,m,0,\dots,0}}\\
&=&\Big(\frac{1}{2}\Big)^m\frac{1}{2}H_{\alpha_{0,\dots,0,m+1,0,\dots,0}}\\
&=&\Big(\frac{1}{2}\Big)^{m+1}H_{\alpha_{0,\dots,0,m+1,0,\dots,0}}
\end{eqnarray*}
\end{enumerate}
This ends the proof.
\end{proof}
\begin{theo} For all $n\in{\N}$, one has $$\alpha_{.|n}\equiv0$$ and
the coefficients of $\Omega_n$ in the basis $\mathcal{B}$ are given
by
$$\lambda_{\overline{i}_{n},\overline{j}_{n}}=\delta_{\overline{i}_{n},\overline{j}_{n}}\Big(\frac{1}{2}\Big)^{|\overline{n}|}\pi^{\frac{d}{2}}\overline{n}!$$
where
\begin{eqnarray*}
\overline{i}_n&=& cl\Big(({i}_{1},\dots,{i}_{n})\Big)\\
\overline{j}_n&=& cl\Big(({j}_{1},\dots,{j}_{n})\Big)
\end{eqnarray*}
and
$$n_{l}=\sharp\Big(\left\{{i}_{k}=l,k=1,\dots,n\right\}\Big),(1\leq l\leq d),\overline{n}=(n_{1},n_{2},\dots,n_{d}).$$
\end{theo}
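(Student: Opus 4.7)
The plan is to read off both claims from the explicit action of $a^\pm_{i|n}$ computed just before the statement, together with Lemma \ref{lh1} and the orthogonality of the multiple Hermite polynomials.

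First I would dispose of $\alpha_{\cdot|n}\equiv 0$. Formula (\ref{eq4}) shows that
$$X_i H_\alpha=\tfrac12\,H_{\alpha_{0,\dots,0,1,0,\dots,0}}+\alpha_i\,H_{\alpha_{0,\dots,0,-1,0,\dots,0}}$$
has only a component of degree $n+1$ and a component of degree $n-1$, so $a^0_{i|n}=P_n X_i P_n\big|_{\Pp_n}=0$ for every $i$ and every $n$. By linearity $a^0_{v|n}=0$ for all $v\in\C^d$, and Remark~1) after Theorem \ref{multi} then gives $\alpha_{v|n}=U_n^{-1}a^0_{v|n}U_n=0$.

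Next I would compute $U_n(e_{\overline{j}_n})$ explicitly, exploiting the commutativity of the creation operators proved in the proposition of Section 2.2. Writing $n_l$ for the multiplicity of the index $l$ in $(j_1,\dots,j_n)$, the commutation $[a^+_j,a^+_k]=0$ lets me reorder and group factors, giving
$$U_n(e_{\overline{j}_n})=a^+_{j_n}\cdots a^+_{j_1}\Phi=(a^+_1)^{n_1}(a^+_2)^{n_2}\cdots(a^+_d)^{n_d}H_{(0,\dots,0)}.$$
Applying Lemma \ref{lh1} successively in each coordinate then yields $U_n(e_{\overline{j}_n})=\bigl(\tfrac12\bigr)^{n_1+\cdots+n_d}H_{\overline n}=\bigl(\tfrac12\bigr)^n H_{\overline n}$ where $\overline n=(n_1,\dots,n_d)$ and $|\overline n|=n$.

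The matrix entries of $\Omega_n$ are now a direct calculation using part (iv) of Theorem \ref{multi} (unitarity of $U$) and the orthogonality relation (\ref{eqnh}). For $\overline i_n,\overline j_n\in\Aa_n$ with associated multi-indices $\overline m$ and $\overline n$,
$$\lambda_{\overline i_n,\overline j_n}=\langle U_n e_{\overline i_n},U_n e_{\overline j_n}\rangle_{\mu}=\Big(\tfrac12\Big)^{2n}\langle H_{\overline m},H_{\overline n}\rangle_{\mu},$$
which vanishes unless $\overline m=\overline n$, i.e.\ $\overline i_n=\overline j_n$. In the diagonal case (\ref{eqnh}) gives $\langle H_{\overline n},H_{\overline n}\rangle_{\mu}=2^n\pi^{d/2}\overline n!$, so
$$\lambda_{\overline i_n,\overline j_n}=\delta_{\overline i_n,\overline j_n}\Big(\tfrac12\Big)^{2n}\,2^n\pi^{d/2}\overline n!=\delta_{\overline i_n,\overline j_n}\Big(\tfrac12\Big)^{|\overline n|}\pi^{d/2}\overline n!,$$
which is the announced identity. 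The only nontrivial obstacle is the bookkeeping: one must verify that the equivalence class $\overline j_n\in\Aa_n$ is in bijection with the multi-index $\overline n=(n_1,\dots,n_d)$ used to label the Hermite polynomials, so that grouping and reordering the commuting $a^+_{j_k}$'s is legitimate. Once this correspondence is pinned down, each step above is a substitution.
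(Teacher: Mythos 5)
Your proposal is correct and follows essentially the same route as the paper's proof: kill $\alpha_{\cdot|n}$ via $a^0_{i|n}=0$ and the identity $\alpha_{\cdot|n}=U_n^{-1}a^0_{\cdot|n}U_n$, reduce $U_ne_{\overline{j}_n}$ to $(a^+_1)^{n_1}\cdots(a^+_d)^{n_d}\Phi=(\tfrac12)^{|\overline{n}|}H_{\overline{n}}$ by commutativity and Lemma \ref{lh1}, then conclude by orthogonality and the norm formula (\ref{eqnh}). No substantive difference from the paper's argument.
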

\begin{proof}  Because $a^{0}_{.|n}=0$, for all $n\in{\N}$, then one
has
$$\alpha_{.|n}=U_{n}^{-1}a^{0}_{.|n}U_{n}=0.$$

Now, recall that
\begin{eqnarray*}
\lambda_{\overline{i}_{n},\overline{j}_{n}}&=&\langle e_{\overline{i}_{n}},\Omega_{n} e_{\overline{j}_{n}}\rangle_{({\C}^{d})^{\widehat{\otimes}n}}\\
&=& \langle
e_{i_{1}}\widehat{\otimes}e_{i_{2}}\widehat{\otimes}\dots\widehat{\otimes}e_{i_{n}},
\Omega_{n}e_{j_{1}}\widehat{\otimes}e_{j_{2}}\widehat{\otimes}\dots\widehat{\otimes}e_{j_{n}}\rangle_{({\C}^{d})^{\widehat{\otimes}n}}\\
&=&\langle a^{+}_{i_{1}}a^{+}_{i_{2}}\dots a^{+}_{i_{n}}\Phi,a^{+}_{j_{1}}a^{+}_{j_{2}}\dots a^{+}_{j_{n}}\Phi\rangle_\mu\\
&=&\langle(a^{+}_{1})^{m_{1}}(a^{+}_{2})^{m_{2}}\dots(a^{+}_{d})^{m_{d}}\Phi,(a^{+}_{1})^{n_{1}}(a^{+}_{2})^{n_{2}}\dots(a^{+}_{d})^{n_{d}}\Phi\rangle_{\mu}
\end{eqnarray*}
where $\Phi=1_{\Pp}=H_{\widetilde{0}}$, $\widetilde{0}=0_{{\R}^{d}}$
and
$m_{l}=\sharp\left\{i_k=l,~k=1,\dots,n\right\},n_{l}=\sharp\left\{j_k=l,~k=1,\dots,n\right\}$
for all $1\leq l\leq d$. Then, from Lemma \ref{lh1}
\begin{eqnarray*}
(a^{+}_{d-1})^{n_{d-1}}(a^{+}_{d})^{n_{d}}\Phi&=&\Big(\frac{1}{2}\Big)^{n_d}(a^{+}_{d-1})^{n_{d-1}}H_{\widetilde{0}_{0,\dots,0,n_d}}\\
&=&\Big(\frac{1}{2}\Big)^{n_{d-1}+n_d}H_{\widetilde{0}_{0,\dots,0,n_{d-1},n_d}}.
\end{eqnarray*}
 Repeating the above argument until to obtain
\begin{eqnarray*}
(a^{+}_{1})^{n_{1}}\dots(a^{+}_{d})^{n_{d}}\Phi&=&\Big(\frac{1}{2}\Big)^{|\overline{n}|}H_{\overline{n}}.
\end{eqnarray*}
where $\overline{n}=(n_1,n_2,\dots,n_d)$.
\begin{enumerate}
\item[(i)] If $\overline{i}_{n}=\overline{j}_{n},$ then one has
\begin{eqnarray*}
\lambda_{\overline{i}_{n},\overline{j}_{n}}
&=&\langle(a^{+}_{1})^{m_{1}}(a^{+}_{2})^{m_{2}}\dots(a^{+}_{d})^{m_{d}}\Phi,(a^{+}_{1})^{n_{1}}(a^{+}_{2})^{n_{2}}\dots(a^{+}_{d})^{n_{d}}\Phi\rangle_{\mu}\\
&=&\Big(\frac{1}{2}\Big)^{2|\overline{n}|}\|H_{\overline{n}}\|^2\\
&=&\Big(\frac{1}{2}\Big)^{2|\overline{n}|}2^{|\overline{n}|}\pi^{\frac{d}{2}}\overline{n}!\\
&=&\Big(\frac{1}{2}\Big)^{|\overline{n}|}\pi^{\frac{d}{2}}\overline{n}!
\end{eqnarray*}
\item[(ii)] If $\overline{i}_{n}\neq\overline{j}_{n},$ then,
$\left\{{i}_{1},\dots,{i}_{n}\right\}\neq\left\{{j}_{1},\dots,{j}_{n}\right\}$
or there exists $l\in\left\{{i}_{1},\dots,{i}_{n}\right\}$ such that
$m_l\neq n_l$.

\item[-] First case : if
$\left\{{i}_{1},\dots,{i}_{n}\right\}\neq\left\{{j}_{1},\dots,{j}_{n}\right\}$,
then there exists $l\in\left\{1,\dots,d\right\}$ such that
$l\in\left\{{i}_{1},\dots,{i}_{n}\right\}$ and
$l\not\in\left\{{j}_{1},\dots,{j}_{n}\right\}$ or the converse.
Without loss of generality suppose that $l=1$ i.e. $m_{1}\neq0$ and
$n_{1}=0.$ Therefore, one gets
\begin{eqnarray*}
\lambda_{\overline{i}_{n},\overline{j}_{n}}&=&\Big(\frac{1}{2}\Big)^{(|\overline{m}|+|\overline{n}|)}\langle H_{\overline{m}}, H_{\overline{n}}\rangle_{\mu}\\
&=&0.
\end{eqnarray*} because $ H_{\overline{m}}$ and $H_{\overline{n}}$
are orthogonal $(\overline{m}\neq\overline{ n}).$

\item[-] Second case : if there exists
$l\in\left\{1,\dots,d\right\}$ such that $m_{l}\neq
n_{l}~i.e.~\overline{m}\neq\overline{ n},$ then, one has
$$\langle H_{\overline{m}}, H_{\overline{n}}\rangle_{\mu}=0.$$  It
follows that
$$\lambda_{\overline{i}_{n},\overline{j}_{n}}=0.$$
\end{enumerate}
\end{proof}


\section{Multiple Laguerre polynomials on ${\R}^d_+$}

As in the multiple Hermite polynomials on ${\R}^d$. The multiple
Laguerre polynomials on ${\R}^d_+$ with parameter
$\alpha=(\alpha_1,\alpha_2,\dots,\alpha_d);~\alpha_j>-1,~j=1,2,\dots,d$
are defined as follows
$$L^\alpha_k=L^{\alpha_1}_{k_1}\otimes L^{\alpha_2}_{k_2}\otimes\dots\otimes L^{\alpha_d}_{k_d}$$
where, $k=(k_1,k_2,\dots,k_d)\in{\N}^d$ such that $|k|=n$ and for
any $i\in\left\{1,2,\dots,d\right\},~L^{\alpha_i}_{k_i}$ is the
classical Laguerre polynomials on ${\R}_+$. For the following
relation we refer the reader to \cite{G.F.D.}.

\begin{equation}\label{eqqql0}
x_iL^{\alpha_i}_{k_i}(x_i)=-(k_i+1)L^{\alpha_i}_{k_i+1}(x_i)+(2k_i+\alpha_i+1)L^{\alpha_i}_{k_i}(x_i)-(k_i+\alpha_i)L^{\alpha_i}_{k_i-1}(x_i)
\end{equation}
$$\|L^{\alpha_i}_{k_i}\|^2=\frac{\Gamma(\alpha_i+k_i+1)}{k_i!}.$$
where $\Gamma$ is the Gamma function defined by
$$\Gamma(y)=\int_0^\infty t^{y-1}e^{-t}dt,\ \ \ \ \forall y>0. $$

It is clear that the multiple Laguerre polynomials are orthogonal
with respect the weight function $$W^L_{\alpha}(x)=x^\alpha
e^{-|x|_1},~x=(x_1,\dots,x_d)\in{\R}^d_+$$

Moreover, the family $(L^\alpha_k)_{|k|=n}$ is an orthogonal basis
of ${\Pp}_n$ with respect to $\mu$, where $\mu$ is the measure of
density $W^L_{\alpha}$ with respect to the Lebesgue measure on
${\R}^d_+$. For $k=(k_1,k_2,\dots,k_d);|k|=n$, one has
\begin{equation}\label{eql1}
L^\alpha_k(x)=L^{\alpha_1}_{k_1}(x_1) L^{\alpha_2}_{k_2}(x_2)\dots
L^{\alpha_d}_{k_d}(x_d).
\end{equation}
Multiplying both sides in (\ref{eql1}) by $x_i$ and using
(\ref{eqqql0}), one gets
\begin{eqnarray*}
x_iL^\alpha_k(x)&=&-(k_i+1)L^{\alpha}_{(k_1,\dots,k_{i-1},k_i+1,k_{i+1},\dots,k_d)}(x)+(2k_i+\alpha_i+1)L^\alpha_k(x)\\&&-(k_i+\alpha_i)
L^{\alpha}_{(k_1,\dots,k_{i-1}k_i-1,k_{i+1},\dots,k_d)}(x).
\end{eqnarray*}
From the above notations, it follows that
\begin{equation}\label{eql2}
X_iL^\alpha_k=-(k_i+1)L^{\alpha}_{k_{0,\dots,0,1,0,\dots,0}}+(2k_i+\alpha_i+1)L^\alpha_k-(k_i+\alpha_i)
L^{\alpha}_{k_{0,\dots,0,-1,0,\dots,0}}
\end{equation} where $-1,1$ are in the $i-th$ index. Note that

$$\|L^\alpha_k\|^2=\prod_{j=1}^d\|L^{\alpha_j}_{k_j}\|=\frac{1}{k!}\prod_{j=1}^d\Gamma(k_j+\alpha_j+1)$$

Now, consider the orthogonal projector from $\mathcal{P}$ to
$\mathcal{P}_{n}$ given by
\begin{eqnarray*}
P_{n}:&=&\sum_{|\alpha|=n}\frac{1}{\|L^\alpha_k\|^2}|L^\alpha_k\rangle\langle L^\alpha_k|,\ \ n\in{\N}\\
P_{-1}:&=&0
\end{eqnarray*}
For $i\in\left\{1,2,\dots,d\right\}$, define the CAP operators as
follows :
\begin{eqnarray*}
a^{+}_{i|n}:&=&P_{n+1}X_{i}P_{n}\\
&=&\sum_{|\beta|=n+1,|k|=n}\frac{1}{\|L^\alpha_\beta\|^2\|L^\alpha_k\|^2}|L^\alpha_\beta\rangle\langle L^\alpha_\beta|X_i|L^\alpha_k\rangle\langle L^\alpha_k|\\
&=&\sum_{|\beta|=n+1,|k|=n}\frac{1}{\|L^\alpha_\beta\|^2\|L^\alpha_k\|^2}\langle
L^\alpha_\beta,X_{i}L^\alpha_k\rangle_{\mu} |L^\alpha_\beta\rangle\langle L^\alpha_k|\\
a^{0}_{i|n}:&=&P_{n}X_iP_{n}\\&=&\sum_{|\beta|=n,|k|=n}\frac{1}{\|L^\alpha_\beta\|^2\|L^\alpha_k\|^2}\langle
L^\alpha_\beta,X_iL^\alpha_k\rangle_{\mu}|L^\alpha_\beta\rangle\langle L^\alpha_k|\\
a^{-}_{i|n}:&=&P_{n-1}X_{i}P_{n},~~~~n\geq1\\
&=&\sum_{|\beta|=n-1,|k|=n}\frac{1}{\|L^\alpha_\beta\|^2\|L^\alpha_k\|^2}\langle
L^\alpha_\beta,X_iL^\alpha_k\rangle_{\mu}|L^\alpha_\beta\rangle\langle
L^\alpha_k|.
\end{eqnarray*}
From (\ref{eql2}), it follows that
\begin{eqnarray*}
a^{+}_{i|n}&=&-\sum_{|k|=n}\frac{(k_i+1)}{\|L^\alpha_k\|^2}|L^{\alpha}_{k_{0,\dots,0,1,0,\dots,0}}\rangle\langle L^\alpha_k|\\
a^{-}_{i|n}&=&-\sum_{|k|=n}\frac{(k_i+\alpha_i)}{\|L^\alpha_k\|^2}|L^{\alpha}_{k_{0,\dots,0,-1,0,\dots,0}}\rangle\langle
L^\alpha_k|,
\ \ n\geq1 \ \ (a^-_{i|0}:=0) \\
a^{0}_{i|n}&=&\sum_{|k|=n}\frac{(2k_i+\alpha_i+1)}{\|L^\alpha_k\|^2}|L^\alpha_k\rangle\langle
L^\alpha_k|.
\end{eqnarray*}
Then, for all $k,\beta\in{\N}^d$ such that $|k|=n$ and
$|\beta|=n+1$, one has
\begin{eqnarray}\label{eql8}
a^{+}_{i|n}L^\alpha_k&=&-(k_i+1)L^{\alpha}_{k_{0,\dots,0,1,0,\dots,0}}\nonumber \\
a^{-}_{i|n+1}L^\alpha_\beta&=&-(k_i+\alpha_i)L^{\alpha}_{\beta_{0,\dots,0,-1,0,\dots,0}}\\
a^{0}_{i|n}L^\alpha_k&=&(2k_i+\alpha_i+1)L^\alpha_k \nonumber
\end{eqnarray}
where $1,-1$ are in the $i-th$ index.
\begin{lem}\label{Ll1}
For all $1\leq i\leq d,m\in{\N}^*$ and
$k=(k_1,k_2,\dots,k_d)\in{\N}^d$ such that $|k|=n$, one has
\begin{equation}\label{eql9}
(a^+_i)^mL^\alpha_k=(-1)^m\prod_{p=1}^m(k_i+p)L^{\alpha}_{k_{0,\dots,0,m,0,\dots,0}}
\end{equation}
where
$$k_{0,\dots,0,m,0,\dots,0}=(k_1,\dots,k_{i-1},k_{i}+m,k_{i+1},\dots,k_d).$$
\end{lem}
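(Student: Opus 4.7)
The plan is to prove Lemma \ref{Ll1} by induction on $m\in\N^*$, using the explicit action of $a^+_i$ on the Laguerre basis given in (\ref{eql8}).

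For the base case $m=1$, the formula $(a^+_i)L^\alpha_k=-(k_i+1)L^\alpha_{k_{0,\dots,0,1,0,\dots,0}}$ is precisely the first identity in (\ref{eql8}), which matches $(-1)^1\prod_{p=1}^{1}(k_i+p)=-(k_i+1)$.

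For the inductive step, I assume the formula holds for some $m\geq 1$ and compute
$$(a^+_i)^{m+1}L^\alpha_k=a^+_i\bigl[(a^+_i)^m L^\alpha_k\bigr]=(-1)^m\prod_{p=1}^{m}(k_i+p)\,a^+_i L^\alpha_{k_{0,\dots,0,m,0,\dots,0}}.$$
The key point is that in the multi-index $k_{0,\dots,0,m,0,\dots,0}$, the $i$-th entry is $k_i+m$, so applying (\ref{eql8}) again gives
$$a^+_i L^\alpha_{k_{0,\dots,0,m,0,\dots,0}}=-(k_i+m+1)L^\alpha_{k_{0,\dots,0,m+1,0,\dots,0}}.$$
Combining the two yields $(-1)^{m+1}\prod_{p=1}^{m+1}(k_i+p)\,L^\alpha_{k_{0,\dots,0,m+1,0,\dots,0}}$, which closes the induction.

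There is no real obstacle here; the only thing to be careful about is bookkeeping of the multi-indices, specifically that the $i$-th component shifts from $k_i$ to $k_i+m$ after $m$ applications, which is exactly what produces the factor $(k_i+m+1)$ at the next step and extends the product from $\prod_{p=1}^m$ to $\prod_{p=1}^{m+1}$. The notational conventions introduced in Section 4.1 for $\beta_{r_1,\dots,r_d}$ make this shift transparent.
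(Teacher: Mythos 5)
Your proof is correct and follows essentially the same route as the paper: induction on $m$, with the base case given by the first identity in (\ref{eql8}) and the inductive step obtained by applying $a^+_i$ to $L^\alpha_{k_{0,\dots,0,m,0,\dots,0}}$, whose $i$-th entry $k_i+m$ produces the new factor $-(k_i+m+1)$. No gaps.
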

\begin{proof} We prove the above lemma by induction on $m\in{\N}^*$.
\begin{enumerate}
\item[-] For $m=1$, one has $$a^+_iL^\alpha_k=-(k_i+1)L^{\alpha}_{k_{0,\dots,0,1,0,\dots,0}}$$
\item[-] Let $m\geq 1$ and suppose that (\ref{eql9}) holds true.
Then, one has
\begin{eqnarray*}
(a^+_i)^{m+1}L^\alpha_k&=&a^+_i(a^+_i)^mL^\alpha_k\\
&=&(-1)^m\prod_{p=1}^m(k_i+p)a^+_iL^{\alpha}_{k_{0,\dots,0,m,0,\dots,0}}\\
&=&(-1)^{m+1}\prod_{p=1}^m(k_i+p)(k_i+m+1)L^{\alpha}_{k_{0,\dots,0,m+1,0,\dots,0}}\\
&=&(-1)^{m+1}\prod_{p=1}^{m+1}(k_i+p)L^{\alpha}_{k_{0,\dots,0,m+1,0,\dots,0}}
\end{eqnarray*}
\end{enumerate}
\end{proof}
\begin{theo}  For all $n\in{\N}$ and $\overline{i}_n=cl\Big(({i}_{1},\dots,{i}_{n})\Big),\overline{j}_n=cl\Big(({j}_{1},\dots,{j}_{n})\Big)\in{\Aa}_n$,
we have
$$\alpha_{e_l|n}e_{\overline{i}_n}=(2n_l+\alpha_l+1)e_{\overline{i}_n}$$ and
the coefficients of $\Omega_n$ in the basis
$\mathcal{B}=(e_{\overline{i}_n})_{\overline{i}_n\in{\Aa}_n}$ are
given by
$$\lambda_{\overline{i}_{n},\overline{j}_{n}}=\delta_{\overline{i}_n,\overline{j}_n}\overline{n}!
\prod_{l=1}^d\Gamma(n_l+\alpha_l+1)$$ where
$$n_{l}=\sharp\Big(\left\{{i}_{k}=l,k=1,\dots,n\right\}\Big),(1\leq
l\leq d),\overline{n}=(n_{1},n_{2},\dots,n_{d}).$$
\end{theo}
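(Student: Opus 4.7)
The plan is to reduce both statements to direct computations through the gradation-preserving isomorphism $U_n$ of Lemma \ref{id-symm-tens}, exploiting the identity $\alpha_{\cdot|n}=U_n^{-1}a^{0}_{\cdot|n}U_n$ from (\ref{alphan}) and the defining relation $\lambda_{\overline{i}_n,\overline{j}_n}=\langle U_ne_{\overline{i}_n},U_ne_{\overline{j}_n}\rangle_\mu$ that follows from (\ref{6.1}) combined with the isometric property of $U_n$.

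The key preparatory step is to evaluate $U_n$ on a basis vector $e_{\overline{i}_n}$ and recognize the result as a single Laguerre polynomial (up to a scalar). Since $[a^+_j,a^+_k]=0$, the product $a^+_{i_1}a^+_{i_2}\cdots a^+_{i_n}\Phi$ depends only on the multiplicities $m_l=\sharp\{i_k=l\}$, so it coincides with $(a^+_1)^{m_1}(a^+_2)^{m_2}\cdots(a^+_d)^{m_d}\Phi$. Applying Lemma \ref{Ll1} iteratively (starting from $\Phi=L^\alpha_{\widetilde{0}}$ and using that $(a^+_l)^{m_l}$ acts only on the $l$-th coordinate of the multi-index), I obtain
$$U_n(e_{\overline{i}_n})=(-1)^{n}\,\overline{m}!\,L^\alpha_{\overline{m}},\qquad \overline{m}=(m_1,\ldots,m_d).$$
The main bookkeeping work sits here: checking that iterated application of Lemma \ref{Ll1} telescopes correctly into the factorial $\overline{m}!=\prod_l m_l!$ and the sign $(-1)^n$ across all $d$ coordinates.

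For the first assertion, I combine this formula with the action of $a^0_{l|n}$ from (\ref{eql8}), which is diagonal on the Laguerre basis with eigenvalue $(2 m_l+\alpha_l+1)$. Thus $a^0_{l|n}U_ne_{\overline{i}_n}=(2n_l+\alpha_l+1)U_ne_{\overline{i}_n}$, and applying $U_n^{-1}$ cancels the scalar prefactor, yielding $\alpha_{e_l|n}e_{\overline{i}_n}=(2n_l+\alpha_l+1)e_{\overline{i}_n}$ as claimed.

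For the second assertion, I write
$$\lambda_{\overline{i}_n,\overline{j}_n}=\langle U_ne_{\overline{i}_n},U_ne_{\overline{j}_n}\rangle_\mu=\overline{m}!\,\overline{n}!\,\langle L^\alpha_{\overline{m}},L^\alpha_{\overline{n}}\rangle_\mu,$$
the signs cancelling. Orthogonality of the Laguerre basis forces this to vanish unless $\overline{m}=\overline{n}$, and the equivalence relation $\Rr$ guarantees $\overline{m}=\overline{n}\Leftrightarrow\overline{i}_n=\overline{j}_n$. In the diagonal case I substitute $\|L^\alpha_{\overline{n}}\|^2=\frac{1}{\overline{n}!}\prod_{l=1}^d\Gamma(n_l+\alpha_l+1)$ to obtain
$$\lambda_{\overline{i}_n,\overline{i}_n}=(\overline{n}!)^2\cdot\tfrac{1}{\overline{n}!}\prod_{l=1}^d\Gamma(n_l+\alpha_l+1)=\overline{n}!\prod_{l=1}^d\Gamma(n_l+\alpha_l+1),$$
which is the announced formula. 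The only subtle points, and thus the principal obstacle, are the careful management of the multi-index bookkeeping and verifying that the equivalence classes under $\Rr$ are in one-to-one correspondence with the multiplicity tuples $\overline{n}\in\mathbb{N}^d$ with $|\overline{n}|=n$, so that orthogonality on the Laguerre side transfers faithfully to orthogonality on the tensor side.
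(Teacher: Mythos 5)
Your proposal is correct and follows essentially the same route as the paper: both compute $U_n e_{\overline{i}_n}=(a^+_1)^{m_1}\cdots(a^+_d)^{m_d}\Phi=(-1)^{|\overline{m}|}\,\overline{m}!\,L^\alpha_{\overline{m}}$ via iterated application of Lemma \ref{Ll1}, then read off $\alpha_{e_l|n}$ from the diagonal action of $a^0_{l|n}$ on the Laguerre basis and $\lambda_{\overline{i}_n,\overline{j}_n}$ from Laguerre orthogonality together with the norm formula $\|L^\alpha_{\overline{n}}\|^2=\frac{1}{\overline{n}!}\prod_l\Gamma(n_l+\alpha_l+1)$. The only cosmetic difference is that the paper splits the off-diagonal case into two sub-cases (supports differ versus multiplicities differ), which your observation that $\Rr$-classes are exactly multiplicity tuples subsumes.
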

\begin{proof}
Recall that
\begin{eqnarray*}
\lambda_{\overline{i}_{n},\overline{j}_{n}}&=&\langle e_{\overline{i}_{n}},\Omega_{n} e_{\overline{j}_{n}}\rangle_{({\C}^{d})^{\widehat{\otimes}n}}\\
&=& \langle
e_{i_{1}}\widehat{\otimes}e_{i_{2}}\widehat{\otimes}\dots\widehat{\otimes}e_{i_{n}},
\Omega_{n}e_{j_{1}}\widehat{\otimes}e_{j_{2}}\widehat{\otimes}\dots\widehat{\otimes}e_{j_{n}}\rangle_{({\C}^{d})^{\widehat{\otimes}n}}\\
&=&\langle a^{+}_{i_{1}}a^{+}_{i_{2}}\dots a^{+}_{i_{n}}\Phi,a^{+}_{j_{1}}a^{+}_{j_{2}}\dots a^{+}_{j_{n}}\Phi\rangle_\mu\\
&=&\langle(a^{+}_{1})^{m_{1}}(a^{+}_{2})^{m_{2}}\dots(a^{+}_{d})^{m_{d}}\Phi,(a^{+}_{1})^{n_{1}}(a^{+}_{2})^{n_{2}}\dots(a^{+}_{d})^{n_{d}}\Phi\rangle_{\mu}
\end{eqnarray*}
where $\Phi=1_{\Pp}=H_{\widetilde{0}}$, $\widetilde{0}=0_{{\R}^{d}}$
and
$m_{l}=\sharp\left\{i_k=l,~k=1,\dots,n\right\},n_{l}=\sharp\left\{j_k=l,~k=1,\dots,n\right\}$
for all $1\leq l\leq d$. On the other hand, from Lemma \ref{Ll1},
one has
\begin{eqnarray*}
(a^+_{d-1})^{n_{d-1}}(a^+_d)^{n_d}\Phi&=&(-1)^{n_d}n_d!(a^+_{d-1})^{n_{d-1}}L^\alpha_{\widetilde{0}_{0,\dots,0,n_d}}\\
&=&(-1)^{n_{d-1}+n_d}n_{d-1}!n_d!L^\alpha_{\widetilde{0}_{0,\dots,0,n_{d-1},n_d}}.
\end{eqnarray*}
Repeating the above argument until to obtain
\begin{equation}\label{eqqL1}
(a_1^+)^{n_1}\dots(a_d^+)^{n_d}\Phi=(-1)^{|\overline{n}|}\overline{n}!L^\alpha_{\overline{n}}
\end{equation}
where $\overline{n}=(n_1,n_2,\dots,n_d)$.
\begin{enumerate}
\item[(i)] If $\overline{i}_n=\overline{j}_n$, then, one has
\begin{eqnarray*}
\lambda_{\overline{i}_{n},\overline{j}_{n}}
&=&\langle(a^{+}_{1})^{m_{1}}(a^{+}_{2})^{m_{2}}\dots(a^{+}_{d})^{m_{d}}\Phi,(a^{+}_{1})^{n_{1}}(a^{+}_{2})^{n_{2}}\dots(a^{+}_{d})^{n_{d}}\Phi\rangle_{\mu}.\\
&=&(\overline{n}!)^2\|L^\alpha_{\overline{n}}\|^2\\
&=&\overline{n}!\prod_{l=1}^d\Gamma(n_l+\alpha_l+1)
\end{eqnarray*}
\item[(ii)] If $\overline{i}_n\neq\overline{j}_n$, then
$\left\{{i}_{1},\dots,{i}_{n}\right\}\neq\left\{{j}_{1},\dots,{j}_{n}\right\}$
or there exists $l\in\left\{{i}_{1},\dots,{i}_{n}\right\}$ such that
$m_l\neq n_l$.

\item[-] First case : if
$\left\{{i}_{1},\dots,{i}_{n}\right\}\neq\left\{{j}_{1},\dots,{j}_{n}\right\}$,
then there exists $l\in\left\{1,\dots,d\right\}$ such that
$l\in\left\{{i}_{1},\dots,{i}_{n}\right\}$ and
$l\not\in\left\{{j}_{1},\dots,{j}_{n}\right\}$ or the converse.
Without loss of generality suppose that $l=1$ i.e. $m_{1}\neq0$ and
$n_{1}=0.$ Therefore, one has
\begin{eqnarray*}
\lambda_{\overline{i}_{n},\overline{j}_{n}}&=&(-1)^{|\overline{m}|+|\overline{n}|}\overline{m}!\overline{n}!
\langle L^\alpha_{\overline{m}},L^\alpha_{\overline{n}}\rangle_\mu\\
&=&0
\end{eqnarray*}
because $ L^\alpha_{\overline{m}}$ and $L^\alpha_{\overline{n}}$ are
orthogonal $(\overline{m}\neq\overline{ n}).$

\item[-] Second case : if there exists
$l\in\left\{1,\dots,d\right\}$ such that $m_{l}\neq
n_{l}~i.e.~\overline{m}\neq\overline{ n},$ then, one gets
$$\langle L^\alpha_{\overline{m}}, L^\alpha_{\overline{n}}\rangle_{\mu}=0.$$  It follows that
$$\lambda_{\overline{i}_{n},\overline{j}_{n}}=0.$$

Now, let $\overline{i}_n=cl\Big((i_1,i_2,\dots,i_d)\Big)\in{\Aa}_n$.
Recall that
$$U_ne_{\overline{i}_n}:=a^+_{i_1}a^+_{i_2}\dots a^+_{i_d}\Phi.$$
Then, from identities (\ref{eqqL1}) and (\ref{eql8}), it follows
that for all $l\in\left\{1,2,\dots,d\right\}$
\begin{eqnarray*}
\alpha_{e_l|n}e_{\overline{i}_n}&:=&U_n^{-1}a^0_{l|n}U_ne_{\overline{i}_n}\\
&=&U^{-1}_na^0_{l|n}a^+_{i_1}a^+_{i_2}\dots a^+_{i_d}\Phi\\
&=&U^{-1}_na^0_{l|n}(a^+_1)^{n_1}\dots(a^+_d)^{n_d}\Phi\\
&=&(-1)^{|\overline{n}|}\overline{n}!U^{-1}_na^0_{l|n}L^\alpha_{\overline{n}}\\
&=&(-1)^{|\overline{n}|}\overline{n}!(2n_l+\alpha_l+1)U^{-1}_nL^\alpha_{\overline{n}}\\
&=&(2n_l+\alpha_l+1)U^{-1}_n(a^+_1)^{n_1}\dots(a^+_d)^{n_d}\Phi\\
&=&(2n_l+\alpha_l+1)U^{-1}_na^+_{i_1}a^+_{i_2}\dots a^+_{i_d}\Phi\\
&=&(2n_l+\alpha_l+1)e_{\overline{i}_n}
\end{eqnarray*}
where $n_j=\sharp\left\{i_p=j; \ \ p=1,2,\dots,d\right\},\ \ 1\leq
j\leq d$.
\end{enumerate}
\end{proof}


\section{Multiple Jacobi  polynomials on the cube}
The multiple Jacobi  polynomials on the cube $[-1,1]^d$ with
parameter
$a=(a_1,a_2,\dots,a_d),\\b=(b_1,b_2,\dots,b_d);~a_j>-1,b_j>-1,~j=1,2,\dots,d$
are defined as follows
$$P^{(a,b)}_\alpha=P^{(a_1,b_1)}_{\alpha_1}\otimes P^{(a_2,b_2)}_{\alpha_2}\otimes\dots\otimes P^{(a_d,b_d)}_{\alpha_d}.$$
where, $\alpha=(\alpha_1,\alpha_2,\dots,\alpha_d)\in{\N}^d$ such
that $|\alpha|=n$ and for any
$i\in\left\{1,2,\dots,d\right\},P^{(a_i,b_i)}_{\alpha_i}$ is the
classical Jacobi polynomials on $[-1,1]$. For the following
relations we refer the reader to \cite{G.F.D.}.
\begin{eqnarray*}
x_iP^{(a_i,b_i)}_{\alpha_i}(x_i)&=&\frac{2(\alpha_i+1)(\alpha_i+b_i+a_i+1)}{(2\alpha_i+b_i+a_i+1)(2\alpha_i+b_i+a_i+2)}P^{(a_i,b_i)}_{\alpha_i+1}(x_i)\\
&-&\frac{(a_i^2-b_i^2)}{(2\alpha_i+b_i+a_i)(2\alpha_i+b_i+a_i+2)}P^{(a_i,b_i)}_{\alpha_i}(x_i)\\
&+&\frac{2(\alpha_i+a_i)(\alpha_i+b_i)}{(2\alpha_i+b_i+a_i)(2\alpha_i+b_i+a_i+1)}P^{(a_i,b_i)}_{\alpha_i-1}(x_i)
\end{eqnarray*}
$$\|P^{(a_i,b_i)}_{\alpha_i}\|^2=\frac{2^{b_i+a_i+1}\Gamma(\alpha_i+a_i+1)\Gamma(\alpha_i+b_i+1)}{\alpha_i!(2\alpha_i+b_i+a_i+1)\Gamma(\alpha_i+b_i+a_i+1)}$$
where $\Gamma$ is the Gamma function defined by
$$\Gamma(y)=\int_0^\infty t^{y-1}e^{-t}dt,\ \ \ \ \forall y>0. $$

It is clear that the multiple Jacobi polynomials on the cube
$[-1,1]^d$ are orthogonal with respect the weight function
$$W^J_{a,b}(x)=\prod_{j=1}^{d}(1-x_j)^{a_j}(1+x_j)^{b_j}, \ \ x=(x_1,\dots,x_d)\in[-1,1]^d.$$

Moreover, the family $(P^{(a,b)}_\alpha)_{|\alpha|=n}$ is an
orthogonal basis of ${\Pp}_n$ with respect to $\mu$, where $\mu$ is
the measure of density $W^J_{a,b}$ with respect to the Lebesgue
measure on $[-1,1]^d$. For
$\alpha=(\alpha_1,\alpha_2,\dots,\alpha_d);|\alpha|=n$, one has
\begin{equation}\label{eqJ2}
P^{(a,b)}_\alpha(x)=P^{(a_1,b_1)}_{\alpha_1}(x_1)
P^{(a_2,b_2)}_{\alpha_2}(x_2)\dots P^{(a_d,b_d)}_{\alpha_d}(x_d).
\end{equation}
Multiplying both sides in (\ref{eqJ2}) by $x_i$, one gets
\begin{eqnarray*}
x_iP^{(a,b)}_{\alpha}(x)&=&\frac{2(\alpha_i+1)(\alpha_i+b_i+a_i+1)}{(2\alpha_i+b_i+a_i+1)(2\alpha_i+b_i+a_i+2)}
P^{(a,b)}_{(\alpha_{1},\dots,\alpha_{i-1},\alpha_{i}+1,\alpha_{i+1},\dots,\alpha_{d})}(x)\\
&-&\frac{(a_i^2-b_i^2)}{(2\alpha_i+b_i+a_i)(2\alpha_i+b_i+a_i+2)}P^{(a,b)}_{\alpha}(x)\\
&+&\frac{2(\alpha_i+a_i)(\alpha_i+b_i)}{(2\alpha_i+b_i+a_i)(2\alpha_i+b_i+a_i+1)}
P^{(a,b)}_{(\alpha_{1},\dots,\alpha_{i-1},\alpha_{i}-1,\alpha_{i+1},\dots,\alpha_{d})}(x)
\end{eqnarray*}
From the above Notation, it follows that
\begin{eqnarray}\label{*}
X_iP^{(a,b)}_{\alpha}&=&\frac{2(\alpha_i+1)(\alpha_i+b_i+a_i+1)}{(2\alpha_i+b_i+a_i+1)(2\alpha_i+b_i+a_i+2)}
P^{(a,b)}_{\alpha_{0,\dots,1,0,\dots,0}}\nonumber\\
&-&\frac{(a_i^2-b_i^2)}{(2\alpha_i+b_i+a_i)(2\alpha_i+b_i+a_i+2)}P^{(a,b)}_{\alpha}\\
&+&\frac{2(\alpha_i+a_i)(\alpha_i+b_i)}{(2\alpha_i+b_i+a_i)(2\alpha_i+b_i+a_i+1)}
P^{(a,b)}_{\alpha_{0,\dots,-1,0,\dots,0}}\nonumber
\end{eqnarray}
where $-1,1$ are in the $i-th$ index. Note that
\begin{eqnarray*}
\|P^{(a,b)}_{\alpha}(x)\|^2&=&\prod_{i=1}^d\|P^{(a_i,b_i)}_{\alpha_i}\|^2\\
&=&\frac{1}{\alpha!}\prod_{i=1}^{d}\frac{2^{b_i+a_i+1}\Gamma(\alpha_i+a_i+1)\Gamma(\alpha_i+b_i+1)}{(2\alpha_i+b_i+a_i+1)\Gamma(\alpha_i+b_i+a_i+1)}
\end{eqnarray*}

Now, consider the orthogonal projector from $\mathcal{P}$ to
$\mathcal{P}_{n}$ given by
\begin{eqnarray*}
P_{n}:&=&\sum_{|\alpha|=n}\frac{1}{\|L^\alpha_k\|^2}|L^\alpha_k\rangle\langle L^\alpha_k|,\ \ n\in{\N}\\
P_{-1}:&=&0
\end{eqnarray*}
For $i\in\left\{1,2,\dots,d\right\}$, define the CAP operators as
follows
\begin{eqnarray*}
a^{+}_{i|n}:&=&P_{n+1}X_{i}P_{n}\\
&=&\sum_{|\beta|=n+1,|\alpha|=n}\frac{1}{\|P^{(a,b)}_{\beta}\|^2\|P^{(a,b)}_{\alpha}\|^2}|P^{(a,b)}_{\beta}\rangle\langle
P^{(a,b)}_{\beta}|
X_i|P^{(a,b)}_{\alpha}\rangle\langle P^{(a,b)}_{\alpha}|\\
&=&\sum_{|\beta|=n+1,|\alpha|=n}\frac{1}{\|P^{(a,b)}_{\beta}\|^2\|P^{(a,b)}_{\alpha}\|^2}\langle
P^{(a,b)}_{\beta},X_{i}P^{(a,b)}_{\alpha}\rangle_{\mu} |P^{(a,b)}_{\beta}\rangle\langle P^{(a,b)}_{\alpha}|\\
a^{0}_{i|n}:&=&P_{n}X_iP_{n}\\&=&\sum_{|\beta|=n,|\alpha|=n}\frac{1}{\|P^{(a,b)}_{\beta}\|^2\|P^{(a,b)}_{\alpha}\|^2}\langle
P^{(a,b)}_{\beta},X_iP^{(a,b)}_{\alpha}\rangle_{\mu}|P^{(a,b)}_{\beta}\rangle\langle P^{(a,b)}_{\alpha}|\\
a^{-}_{i|n}:&=&P_{n-1}X_{i}P_{n}, \ \ \ \ n\geq1\\
&=&\sum_{|\beta|=n-1,|\alpha|=n}\frac{1}{\|P^{(a,b)}_{\beta}\|^2\|P^{(a,b)}_{\alpha}\|^2}\langle
P^{(a,b)}_{\beta},X_iP^{(a,b)}_{\alpha}\rangle_{\mu}|P^{(a,b)}_{\beta}\rangle\langle
P^{(a,b)}_{\alpha}|.
\end{eqnarray*}
From $(\ref{*})$, one has
\begin{eqnarray*}
a^{+}_{i|n}&=&\sum_{|\alpha|=n}\frac{2(\alpha_i+1)(\alpha_i+b_i+a_i+1)}{(2\alpha_i+b_i+a_i+1)(2\alpha_i+b_i+a_i+2)}\frac{1}{\|P^{(a,b)}_{\alpha}\|^2}
|P^{(a,b)}_{\alpha_{0,\dots,0,1,0,\dots,0}}\rangle\langle P^{(a,b)}_{\alpha}|\\
a^{-}_{i|n}&=&\sum_{|\alpha|=n}\frac{2(\alpha_i+a_i)(\alpha_i+b_i)}{(2\alpha_i+b_i+a_i)(2\alpha_i+b_i+a_i+1)}\frac{1}{\|P^{(a,b)}_{\alpha}\|^2}
|P^{(a,b)}_{\alpha_{0,\dots,0,-1,0,\dots,0}}\rangle\langle P^{(a,b)}_{\alpha}|,~n\geq1\\
&& (a^{-}_{i|0}:=0)\\
a^{0}_{i|n}&=&-\sum_{|\alpha|=n}\frac{(a_i^2-b_i^2)}{(2\alpha_i+b_i+a_i)(2\alpha_i+b_i+a_i+2)}\frac{1}{\|P^{(a,b)}_{\alpha}\|^2}
|P^{(a,b)}_{\alpha}\rangle\langle P^{(a,b)}_{\alpha}|
\end{eqnarray*}
Then, for all $\alpha,\beta\in{\N}^d$ such that $|\alpha|=n$ and
$|\beta|=n+1$, one has
\begin{eqnarray*}
a^{+}_{i|n}P^{(a,b)}_{\alpha}&=&\frac{2(\alpha_i+1)(\alpha_i+b_i+a_i+1)}{(2\alpha_i+b_i+a_i+1)(2\alpha_i+b_i+a_i+2)}
P^{(a,b)}_{\alpha_{0,\dots,0,1,0,\dots,0}}\\
a^{-}_{i|n+1}P^{(a,b)}_{\beta}&=&\frac{2(\beta_i+a_i)(\beta_i+b_i)}{(2\beta_i+b_i+a_i)(2\beta_i+b_i+a_i+1)}P^{(a,b)}_{\beta_{0,\dots,0,-1,0,\dots,0}}
\end{eqnarray*}
\begin{equation}{\label{eqJ3}}
a^{0}_{i|n}P^{(a,b)}_{\alpha}=-\frac{(a_i^2-b_i^2)}{(2\alpha_i+b_i+a_i)(2\alpha_i+b_i+a_i+2)}P^{(a,b)}_{\alpha}
\end{equation}
where $-1,1$ are in the $i-th$ index.
\begin{lem}\label{LJ1} For all $1\leq i\leq d,m\in{\N}^*$ and
$\alpha=(\alpha_1,\alpha_2,\dots,\alpha_d)$ such that $|\alpha|=n$,
one has
\begin{equation}\label{eqJ4}
(a^+_i)^mP^{(a,b)}_{\alpha}=\prod_{p=0}^{m-1}\frac{2(\alpha_i+p+1)(\alpha_i+b_i+a_i+p+1)}{(2\alpha_i+2p+b_i+a_i+1)(2\alpha_i+2p+b_i+a_i+2)}
P^{(a,b)}_{\alpha_{0,\dots,0,m,0,\dots,0}}\\
\end{equation} where $$\alpha_{0,\dots,0,m,0,\dots,0}=(\alpha_1,\dots,\alpha_{i-1},\alpha_i+m,\alpha_{i+1},\dots,\alpha_d).$$
\end{lem}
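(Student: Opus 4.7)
The plan is to prove Lemma \ref{LJ1} by induction on $m\in\mathbb{N}^*$, exactly mirroring the strategy used in Lemma \ref{Ll1} for the Laguerre case. The base case $m=1$ is nothing but the already established action formula for $a^+_{i|n}$ on $P^{(a,b)}_\alpha$ displayed just above the lemma (the line preceding (\ref{eqJ3})); indeed the product $\prod_{p=0}^{0}\cdots$ reduces to the single factor $\tfrac{2(\alpha_i+1)(\alpha_i+b_i+a_i+1)}{(2\alpha_i+b_i+a_i+1)(2\alpha_i+b_i+a_i+2)}$, which is exactly the coefficient in front of $P^{(a,b)}_{\alpha_{0,\dots,0,1,0,\dots,0}}$.

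For the inductive step, assume (\ref{eqJ4}) holds for some $m\geq 1$. Writing $(a^+_i)^{m+1}P^{(a,b)}_{\alpha}=a^+_i\bigl((a^+_i)^m P^{(a,b)}_{\alpha}\bigr)$ and pulling the scalar product of $m$ factors out, the task reduces to computing $a^+_i P^{(a,b)}_{\alpha_{0,\dots,0,m,0,\dots,0}}$. Since the $i$-th coordinate of the multi-index $\alpha_{0,\dots,0,m,0,\dots,0}$ equals $\alpha_i+m$, applying the base case formula with $\alpha_i$ replaced by $\alpha_i+m$ produces the extra factor
$$
\frac{2(\alpha_i+m+1)(\alpha_i+m+b_i+a_i+1)}{(2\alpha_i+2m+b_i+a_i+1)(2\alpha_i+2m+b_i+a_i+2)},
$$
which is precisely the $p=m$ term in the product on the right-hand side of (\ref{eqJ4}) at level $m+1$. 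Multiplying this new factor into the inductive hypothesis and noticing that $(\alpha_{0,\dots,0,m,0,\dots,0})_{0,\dots,0,1,0,\dots,0}=\alpha_{0,\dots,0,m+1,0,\dots,0}$ yields the desired identity at level $m+1$, closing the induction.

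There is no serious obstacle here; the only points that require care are bookkeeping-level. One must check that the multi-index shift is associative in the expected way, namely $(\alpha_{0,\dots,0,m,0,\dots,0})_{0,\dots,0,1,0,\dots,0}=\alpha_{0,\dots,0,m+1,0,\dots,0}$, which is immediate from the definition in subsection \textbf{Basic Notations}, and one must align the index of the newly produced factor with the convention $p=m$ so that the product indexed by $p\in\{0,\dots,m-1\}$ in the hypothesis becomes the product indexed by $p\in\{0,\dots,m\}$ in the conclusion. No denominator vanishes since $a_i,b_i>-1$ and $\alpha_i,p\in\mathbb{N}$, so the expressions in (\ref{eqJ4}) are well-defined throughout.
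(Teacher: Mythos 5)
Your proposal is correct and follows exactly the paper's own argument: induction on $m$ with base case given by the displayed action of $a^+_{i|n}$ on $P^{(a,b)}_{\alpha}$, and the inductive step obtained by applying $a^+_i$ once more with $\alpha_i$ replaced by $\alpha_i+m$, which produces precisely the $p=m$ factor. The additional bookkeeping remarks (associativity of the multi-index shift, non-vanishing denominators) are sound and only make explicit what the paper leaves implicit.
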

\begin{proof} We prove the above lemma by induction on $m\in{\N}^*$.
\begin{enumerate}
\item[-] For $m=1$, one has $$a^{+}_iP^{(a,b)}_{\alpha}=\frac{2(\alpha_i+1)(\alpha_i+b_i+a_i+1)}{(2\alpha_i+b_i+a_i+1)(2\alpha_i+b_i+a_i+2)}
P^{(a,b)}_{\alpha_{0,\dots,0,1,0,\dots,0}}$$
\item[-] Let $m\geq1$ and suppose that (\ref{eqJ4}) holds true.
Then, one has
\begin{eqnarray*}
(a^+_i)^{m+1}P^{(a,b)}_{\alpha}&=&a^+_i(a^+_i)^mP^{(a,b)}_{\alpha}\\
&=&\prod_{p=0}^{m-1}\frac{2(\alpha_i+p+1)(\alpha_i+b_i+a_i+p+1)}{(2\alpha_i+2p+b_i+a_i+1)(2\alpha_i+2p+b_i+a_i+2)}
a^+_iP^{(a,b)}_{\alpha_{0,\dots,0,m,0,\dots,0}}\\
&=&\prod_{p=0}^{m-1}\frac{2(\alpha_i+p+1)(\alpha_i+b_i+a_i+p+1)}{(2\alpha_i+2p+b_i+a_i+1)(2\alpha_i+2p+b_i+a_i+2)}\\
&&\frac{2(\alpha_i+m+1)(\alpha_i+b_i+a_i+m+1)}{(2\alpha_i+2m+b_i+a_i+1)(2\alpha_i+2m+b_i+a_i+2)}P^{(a,b)}_{\alpha_{0,\dots,0,m+1,0,\dots,0}}\\
&=&\prod_{p=0}^{m}\frac{2(\alpha_i+p+1)(\alpha_i+b_i+a_i+p+1)}{(2\alpha_i+2p+b_i+a_i+1)(2\alpha_i+2p+b_i+a_i+2)}
P^{(a,b)}_{\alpha_{0,\dots,0,m+1,0,\dots,0}}
\end{eqnarray*}
\end{enumerate}
\end{proof}
\begin{theo}
 For all $n\in{\N}$ and $\overline{i}_n=cl\Big(({i}_{1},\dots,{i}_{n})\Big),\overline{j}_n=cl\Big(({j}_{1},\dots,{j}_{n})\Big)\in{\Aa}_n$,
we have
$$\alpha_{e_l|n}e_{\overline{i}_n}=-\frac{(a_l^2-b_l^2)}{(2n_l+b_l+a_l)(2n_l+b_l+a_l+2)}e_{\overline{i}_n}$$ and
the coefficients of $\Omega_n$ in the basis
$\mathcal{B}=(e_{\overline{i}_n})_{\overline{i}_n\in{\Aa}_n}$ are
given by
\begin{eqnarray}\label{eqJ6}
\lambda_{\overline{i}_{n},\overline{j}_{n}}&=&\delta_{\overline{i}_n,\overline{j}_n}\frac{2^{|a|+|b|+d}}{\overline{n}!}
\prod_{i=1}^d\Big(\prod_{p=0}^{n_i-1}\frac{2(p+1)(b_i+a_i+p+1)}{(2p+b_i+a_i+1)(2p+b_i+a_i+2)}\Big)^2 \nonumber \\
&&\prod_{j=1}^d\frac{\Gamma(n_j+a_j+1)\Gamma(n_j+b_j+1)}{(2n_j+b_j+a_j+1)\Gamma(n_j+b_j+a_j+1)}
\end{eqnarray}
where
$$n_{l}=\sharp\Big(\left\{{i}_{k}=l,k=1,\dots,n\right\}\Big),(1\leq l\leq d),\overline{n}=(n_{1},n_{2},\dots,n_{d})$$
with the convention
$$\prod_{p=0}^{-1}\frac{2(p+1)(b_i+a_i+p+1)}{(2p+b_i+a_i+1)(2p+b_i+a_i+2)}=1$$
(this convention is used when $n_i=0$).
\end{theo}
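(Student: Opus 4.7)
The plan is to imitate the arguments already carried out for the Hermite and Laguerre cases, with the Jacobi three-term coefficients playing the role of the Hermite and Laguerre ones. By construction
$$\lambda_{\overline{i}_n,\overline{j}_n}=\langle U_n e_{\overline{i}_n},U_n e_{\overline{j}_n}\rangle_\mu=\langle a^+_{i_1}\cdots a^+_{i_n}\Phi,a^+_{j_1}\cdots a^+_{j_n}\Phi\rangle_\mu,$$
and since the creation operators commute among themselves this equals
$$\langle (a^+_1)^{m_1}\cdots(a^+_d)^{m_d}\Phi,(a^+_1)^{n_1}\cdots(a^+_d)^{n_d}\Phi\rangle_\mu$$
where $m_l$ and $n_l$ are the multiplicities of the index $l$ in $\overline{i}_n$ and $\overline{j}_n$ respectively. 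So the first step is to reduce everything to these factorised expressions.

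Next, I would iterate Lemma \ref{LJ1}, exactly as in the Hermite and Laguerre proofs, starting from $(a^+_d)^{n_d}\Phi$, then applying $(a^+_{d-1})^{n_{d-1}}$ and so on. Each factor $a^+_k$ modifies only the $k$-th component of the index, so the iteration produces
$$(a^+_1)^{n_1}\cdots(a^+_d)^{n_d}\Phi=C_{\overline{n}}\,P^{(a,b)}_{\overline{n}},\qquad C_{\overline{n}}:=\prod_{i=1}^d\prod_{p=0}^{n_i-1}\frac{2(p+1)(b_i+a_i+p+1)}{(2p+b_i+a_i+1)(2p+b_i+a_i+2)},$$
with the convention $\prod_{p=0}^{-1}=1$ when $n_i=0$.

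For the off-diagonal entries $\overline{i}_n\neq\overline{j}_n$, either the underlying index sets differ or some multiplicity differs, so $\overline{m}\neq\overline{n}$, and the inner product $\langle P^{(a,b)}_{\overline{m}},P^{(a,b)}_{\overline{n}}\rangle_\mu$ vanishes by the orthogonality of the multiple Jacobi polynomials. For the diagonal entries we have $\overline{m}=\overline{n}$ and $\lambda_{\overline{n},\overline{n}}=C_{\overline{n}}^2\,\|P^{(a,b)}_{\overline{n}}\|^2$, and inserting the explicit expression for $\|P^{(a,b)}_{\overline{n}}\|^2=\frac{1}{\overline{n}!}\prod_j\frac{2^{a_j+b_j+1}\Gamma(n_j+a_j+1)\Gamma(n_j+b_j+1)}{(2n_j+b_j+a_j+1)\Gamma(n_j+b_j+a_j+1)}$ gives (\ref{eqJ6}).

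For the preservation part, I would use the identity $\alpha_{e_l|n}=U_n^{-1}a^0_{l|n}U_n$ from (\ref{alphan}). Applying $U_n$ to $e_{\overline{i}_n}$ produces (up to the nonzero constant $C_{\overline{n}}$) the polynomial $P^{(a,b)}_{\overline{n}}$, on which $a^0_{l|n}$ acts diagonally by (\ref{eqJ3}) with eigenvalue $-(a_l^2-b_l^2)/[(2n_l+b_l+a_l)(2n_l+b_l+a_l+2)]$. Pulling back by $U_n^{-1}$ cancels $C_{\overline{n}}$ and yields the claimed eigenvalue formula for $\alpha_{e_l|n}e_{\overline{i}_n}$. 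The main obstacle is purely bookkeeping: keeping track of the double product defining $C_{\overline{n}}$ and verifying that squaring it and multiplying by $\|P^{(a,b)}_{\overline{n}}\|^2$ assembles exactly into the announced formula, with the $2^{|a|+|b|+d}$ factor coming from $\prod_j 2^{a_j+b_j+1}$.
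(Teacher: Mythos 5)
Your proposal follows exactly the same route as the paper's proof: reduce $\lambda_{\overline{i}_n,\overline{j}_n}$ to the factorised expression via commutativity of the $a^+_k$, iterate Lemma \ref{LJ1} to get $(a^+_1)^{n_1}\cdots(a^+_d)^{n_d}\Phi=C_{\overline{n}}P^{(a,b)}_{\overline{n}}$, conclude by orthogonality off the diagonal and by the norm formula on the diagonal, and obtain the $\alpha_{e_l|n}$ eigenvalue from $U_n^{-1}a^0_{l|n}U_n$ together with the diagonal action (\ref{eqJ3}). The argument is correct and matches the paper's proof in every essential step.
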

\begin{proof}
Recall that
\begin{eqnarray*}
\lambda_{\overline{i}_{n},\overline{j}_{n}}&=&\langle e_{\overline{i}_{n}},\Omega_{n} e_{\overline{j}_{n}}\rangle_{({\C}^{d})^{\widehat{\otimes}n}}\\
&=& \langle
e_{i_{1}}\widehat{\otimes}e_{i_{2}}\widehat{\otimes}\dots\widehat{\otimes}e_{i_{n}},
\Omega_{n}e_{j_{1}}\widehat{\otimes}e_{j_{2}}\widehat{\otimes}\dots\widehat{\otimes}e_{j_{n}}\rangle_{({\C}^{d})^{\widehat{\otimes}n}}\\
&=&\langle a^{+}_{i_{1}}a^{+}_{i_{2}}\dots a^{+}_{i_{n}}\Phi,a^{+}_{j_{1}}a^{+}_{j_{2}}\dots a^{+}_{j_{n}}\Phi\rangle_\mu\\
&=&\langle(a^{+}_{1})^{m_{1}}(a^{+}_{2})^{m_{2}}\dots(a^{+}_{d})^{m_{d}}\Phi,(a^{+}_{1})^{n_{1}}(a^{+}_{2})^{n_{2}}\dots(a^{+}_{d})^{n_{d}}\Phi\rangle_{\mu}
\end{eqnarray*}
where $\Phi=1_{\Pp}=H_{\widetilde{0}}$, $\widetilde{0}=0_{{\R}^{d}}$
and
$m_{l}=\sharp\left\{i_k=l,~k=1,\dots,n\right\},n_{l}=\sharp\left\{j_k=l,~k=1,\dots,n\right\}$
for all $1\leq l\leq d$. Then, from Lemma \ref{LJ1}, one has
\begin{eqnarray*}
(a^+_{d-1})^{n_{d-1}}(a^+_d)^{n_d}\Phi&=&\prod_{p=0}^{n_d-1}\frac{2(p+1)(b_d+a_d+p+1)}{(2p+b_d+a_d+1)(2p+b_d+a_d+2)}
(a^+_{d-1})^{n_{d-1}}P^{(a,b)}_{\widetilde{0}_{0,\dots,0,n_d}}\\
&=&\prod_{p=0}^{n_d-1}\frac{2(p+1)(b_d+a_d+p+1)}{(2p+b_d+a_d+1)(2p+b_d+a_d+2)}\\
&&\prod_{q=0}^{n_{d-1}-1}\frac{2(q+1)(b_{d-1}+a_{d-1}+q+1)}{(2q+b_{d-1}+a_{d-1}+1)(2q+b_{d-1}+a_{d-1}+2)}P^{(a,b)}_{\widetilde{0}_{0,\dots,0,n_{d-1},n_d}}
\end{eqnarray*}
Repeating the above argument until to obtain
\begin{equation}\label{eqJ5}
(a_1^+)^{n_1}\dots(a_d^+)^{n_d}\Phi=\prod_{i=1}^d\prod_{p=0}^{n_i-1}\frac{2(p+1)(b_i+a_i+p+1)}{(2p+b_i+a_i+1)(2p+b_i+a_i+2)}P^{(a,b)}_{\overline{n}}
\end{equation}
where $\overline{n}=(n_1,n_2,\dots,n_d).$
\begin{enumerate}
\item[(i)] If $\overline{i}_n=\overline{j}_n$, then one has
\begin{eqnarray*}
\lambda_{\overline{i}_{n},\overline{j}_{n}}
&=&\langle(a^{+}_{1})^{m_{1}}(a^{+}_{2})^{m_{2}}\dots(a^{+}_{d})^{m_{d}}\Phi,(a^{+}_{1})^{n_{1}}(a^{+}_{2})^{n_{2}}\dots(a^{+}_{d})^{n_{d}}\Phi\rangle_{\mu}.\\
&=&\prod_{i=1}^d\Big(\prod_{p=0}^{n_i-1}\frac{2(p+1)(b_i+a_i+p+1)}{(2p+b_i+a_i+1)(2p+b_i+a_i+2)}\Big)^2\|P^{(a,b)}_{\overline{n}}\|^2\\
&=&\frac{2^{|a|+|b|+d}}{\overline{n}!}\prod_{i=1}^d\Big(\prod_{p=0}^{n_i-1}\frac{2(p+1)(b_i+a_i+p+1)}{(2p+b_i+a_i+1)(2p+b_i+a_i+2)}\Big)^2\\
&&\prod_{j=1}^d\frac{\Gamma(n_j+a_j+1)\Gamma(n_j+b_j+1)}{(2n_j+b_j+a_j+1)\Gamma(n_j+b_j+a_j+1)}
\end{eqnarray*}
\item[(ii)] If $\overline{i}_n\neq\overline{j}_n$, then
$\left\{{i}_{1},\dots,{i}_{n}\right\}\neq\left\{{j}_{1},\dots,{j}_{n}\right\}$
or there exists $l\in\left\{{i}_{1},\dots,{i}_{n}\right\}$ such that
$m_l\neq n_l$.

\item[-] First case : if
$\left\{{i}_{1},\dots,{i}_{n}\right\}\neq\left\{{j}_{1},\dots,{j}_{n}\right\}$,
then there exists $l\in\left\{1,\dots,d\right\}$ such that
$l\in\left\{{i}_{1},\dots,{i}_{n}\right\}$ and
$l\not\in\left\{{j}_{1},\dots,{j}_{n}\right\}$ or the converse.
Without loss of generality suppose that $l=1$ i.e. $m_{1}\neq0$ and
$n_{1}=0.$ Therefore, one has
\begin{eqnarray*}
\lambda_{\overline{i}_{n},\overline{j}_{n}}&=&\prod_{i=1}^d\prod_{p=0}^{m_i-1}\frac{2(p+1)(b_i+a_i+p+1)}{(2p+b_i+a_i+1)(2p+b_i+a_i+2)}\\
&&\prod_{i=2}^d\prod_{p=0}^{n_i-1}\frac{2(p+1)(b_i+a_i+p+1)}{(2p+b_i+a_i+1)(2p+b_i+a_i+2)}\langle
P^{(a,b)}_{\overline{m}},P^{(a,b)}_{\overline{n}}\rangle\\
&=&0
\end{eqnarray*}
because $ P^{(a,b)}_{\overline{m}}$ and $P^{(a,b)}_{\overline{n}}$
are orthogonal $(\overline{m}\neq\overline{ n}).$

\item[-] Second case : if there exists
$l\in\left\{1,\dots,d\right\}$ such $m_{l}\neq
n_{l}~i.e.~\overline{m}\neq\overline{ n},$ then, one gets
$$\langle P^{(a,b)}_{\overline{m}}, P^{(a,b)}_{\overline{n}}\rangle_{\mu}=0.$$  It follows that
$$\lambda_{\overline{i}_{n},\overline{j}_{n}}=0.$$

Now, let $\overline{i}_n=cl\Big((i_1,i_2,\dots,i_d)\Big)\in{\Aa}_n$.
Recall that
$$U_ne_{\overline{i}_n}:=a^+_{i_1}a^+_{i_2}\dots a^+_{i_d}\Phi.$$
Then, from identities (\ref{eqJ3}) and (\ref{eqJ5}), it follows that
for all $l\in\left\{1,2,\dots,d\right\}$
\begin{eqnarray*}
\alpha_{e_l|n}e_{\overline{i}_n} &: =&U_n^{-1}a^0_{l|n}U_ne_{\overline{i}_n}\\
&=&U^{-1}_na^0_{l|n}a^+_{i_1}a^+_{i_2}\dots a^+_{i_d}\Phi\\
&=&U^{-1}_na^0_{l|n}(a^+_1)^{n_1}\dots(a^+_d)^{n_d}\Phi\\
&=&\prod_{i=1}^d\prod_{p=0}^{n_i-1}\frac{2(p+1)(b_i+a_i+p+1)}{(2p+b_i+a_i+1)(2p+b_i+a_i+2)}U^{-1}_na^0_{l|n}P^{(a,b)}_{\overline{n}}\\
&=-&\prod_{i=1}^d\prod_{p=0}^{n_i-1}\frac{2(p+1)(b_i+a_i+p+1)}{(2p+b_i+a_i+1)(2p+b_i+a_i+2)}\\
&&\frac{(a_l^2-b_l^2)}{(2n_l+b_l+a_l)(2n_l+b_l+a_l+2)}U^{-1}_nP^{(a,b)}_{\overline{n}}\\
&=&-\frac{(a_l^2-b_l^2)}{(2n_l+b_l+a_l)(2n_l+b_l+a_l+2)}U^{-1}_n(a^+_1)^{n_1}\dots(a^+_d)^{n_d}\Phi\\
&=&-\frac{(a_l^2-b_l^2)}{(2n_l+b_l+a_l)(2n_l+b_l+a_l+2)}U^{-1}_na^+_{i_1}a^+_{i_2}\dots a^+_{i_d}\Phi\\
&=&-\frac{(a_l^2-b_l^2)}{(2n_l+b_l+a_l)(2n_l+b_l+a_l+2)}e_{\overline{i}_n}
\end{eqnarray*}
where $n_j=\sharp\left\{i_p=j; \ \ p=1,2,\dots,d\right\},\ \ 1\leq
j\leq d$.
\end{enumerate}
\end{proof}
\subsection{Multiple Gegenbauer polynomials on the cube}
The multiple Gegenbauer polynomials on the cube with parameter
$\lambda=(\lambda_1,\lambda_2,\dots,\lambda_d)$ such that
$\lambda_i>-\frac{1}{2}$ are a particular case of the multiple
Jacobi polynomials with parameter\\
$a=(a_1,a_2,\dots,a_d),~b=(b_1,b_2,\dots,b_d)$ when
$a_i=b_i=\lambda_i-\frac{1}{2}$, $i=1,2,\dots,d$.
\begin{theo}
 For all $n\in{\N}$ and $\overline{i}_n=cl\Big(({i}_{1},\dots,{i}_{n})\Big),\overline{j}_n=cl\Big(({j}_{1},\dots,{j}_{n})\Big)\in{\Aa}_n$,
we have
$$\alpha_{.|n}\equiv0$$ and
the coefficients of $\Omega_n$ in the basis
$\mathcal{B}=(e_{\overline{i}_n})_{\overline{i}_n\in{\Aa}_n}$ are
given by
\begin{eqnarray}\label{eqG1}
\lambda_{\overline{i}_{n},\overline{j}_{n}}&=&\delta_{\overline{i}_n,\overline{j}_n}\frac{2^{2|\lambda|}}{\overline{n}!}
\prod_{i=1}^d\Big(\prod_{p=0}^{n_i-1}\frac{(p+1)(2\lambda_i+p)}{(p+\lambda_i)(2p+2\lambda_i+1)}\Big)^2 \nonumber\\
&&\prod_{j=1}^d\frac{\Big[\Gamma(n_j+\lambda_j+\frac{1}{2})\Big]^2}{(2n_j+2\lambda_j)\Gamma(n_j+2\lambda_j)}
\end{eqnarray}
where
$$n_{l}=\sharp\Big(\left\{{i}_{k}=l,k=1,\dots,n\right\}\Big),(1\leq l\leq d),\overline{n}=(n_{1},n_{2},\dots,n_{d})$$
with the convention
$$\prod_{p=0}^{-1}\frac{(p+1)(2\lambda_i+p)}{(p+\lambda_i)(2p+2\lambda_i+1)}=1$$
(this convention is used when $n_i=0$).
\end{theo}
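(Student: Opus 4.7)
The plan is to derive this statement as a direct specialization of the Jacobi theorem proved just above, using the observation that the multiple Gegenbauer polynomials are exactly the multiple Jacobi polynomials with parameters $a_i = b_i = \lambda_i - \tfrac{1}{2}$ (for $1 \leq i \leq d$), as noted in the paragraph preceding the statement. Thus I would not redo the CAP-operator analysis from scratch; instead I would substitute these values into the two formulas produced by the Jacobi theorem.

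For the first claim $\alpha_{\cdot|n} \equiv 0$, I would start from the identity
$$\alpha_{e_l|n} e_{\overline{i}_n} = -\frac{a_l^2 - b_l^2}{(2n_l + b_l + a_l)(2n_l + b_l + a_l + 2)} e_{\overline{i}_n},$$
and observe that with $a_l = b_l = \lambda_l - \tfrac{1}{2}$ the numerator $a_l^2 - b_l^2$ vanishes for every $l$, hence $\alpha_{e_l|n} \equiv 0$ on each $({\C}^d)^{\widehat{\otimes} n}$. This step is essentially automatic.

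For the second claim concerning $\Omega_n$, I would plug the values $a_i = b_i = \lambda_i - \tfrac{1}{2}$ into formula (\ref{eqJ6}). The orthogonality $\lambda_{\overline{i}_n,\overline{j}_n} = 0$ for $\overline{i}_n \neq \overline{j}_n$ is inherited directly. For the diagonal coefficient I would check term-by-term: the prefactor $2^{|a|+|b|+d}$ becomes $2^{2|\lambda| - d + d} = 2^{2|\lambda|}$; the inner product $b_i + a_i + p + 1 = 2\lambda_i + p$ together with $2p + b_i + a_i + 1 = 2p + 2\lambda_i$ and $2p + b_i + a_i + 2 = 2p + 2\lambda_i + 1$, so that the factor inside the squared product reduces to
$$\frac{2(p+1)(2\lambda_i + p)}{(2p+2\lambda_i)(2p+2\lambda_i+1)} = \frac{(p+1)(2\lambda_i + p)}{(p+\lambda_i)(2p+2\lambda_i+1)};$$
finally the Gamma factors collapse to $[\Gamma(n_j + \lambda_j + \tfrac{1}{2})]^2 / [(2n_j + 2\lambda_j) \Gamma(n_j + 2\lambda_j)]$. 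Assembling these pieces yields exactly (\ref{eqG1}).

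There is essentially no obstacle in this proof; the only thing to be careful about is the algebraic simplification of the middle factor (making sure the extra $2$ in the numerator is absorbed into the denominator factor $2p + 2\lambda_i = 2(p+\lambda_i)$), and keeping the empty-product convention $\prod_{p=0}^{-1}(\cdots) = 1$ when some $n_i = 0$, which is already stated in the theorem. The proof therefore reduces to the Jacobi theorem together with this substitution.
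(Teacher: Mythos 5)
Your proposal is correct and follows exactly the paper's own route: the paper's proof is the one-line statement that it suffices to take $a_i=b_i=\lambda_i-\tfrac{1}{2}$ in (\ref{eqJ6}), and your substitution checks (the prefactor $2^{|a|+|b|+d}=2^{2|\lambda|}$, the cancellation of the $2$ against $2p+2\lambda_i=2(p+\lambda_i)$, the Gamma factors, and the vanishing of $a_l^2-b_l^2$ for $\alpha_{\cdot|n}$) all simplify as you claim. You have merely written out the algebra that the paper leaves implicit.
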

\begin{proof}
It is sufficient to take
$a_i=b_i=\lambda_i-\frac{1}{2},~~i=1,2,\dots,d$ in (\ref{eqJ6}).
\end{proof}
\subsection{Multiple Chebyshev  polynomials  on the cube}
The multiple Chebyshev  polynomials of first Kind (resp. second
Kind) on the cube are a particular case of the multiple Gegenbauer
polynomials on the cube with parameter
$\lambda=(\lambda_1,\lambda_2,\dots,\lambda_d)$ when $\lambda_i=0~~
(resp.~~\lambda_i=1), \ \ i=1,2,\dots,d$.
\begin{theo}
 For all $n\in{\N}$ and $\overline{i}_n=cl\Big(({i}_{1},\dots,{i}_{n})\Big),\overline{j}_n=cl\Big(({j}_{1},\dots,{j}_{n})\Big)\in{\Aa}_n$,
we have
\begin{enumerate}
\item[i)] If the Jacobi sequences associated to the multiple Chebyshev
polynomials of first kind, then
$$\alpha_{.|n}\equiv0$$ and the coefficients of $\Omega_n$, in the basis
$\mathcal{B}=(e_{\overline{i}_n})_{\overline{i}_n\in{\Aa}_n}$ are
given by
\begin{eqnarray*}
\lambda_{\overline{i}_{n},\overline{j}_{n}}&=&\delta_{\overline{i}_n,\overline{j}_n}\frac{1}{\overline{n}!}
\prod_{i=1}^d\Big(\prod_{p=0}^{n_i-1}\frac{(p+1)}{(2p+1)}\Big)^2
\prod_{j=1}^d\frac{\Big[\Gamma(n_j+\frac{1}{2})\Big]^2}{2n_j\Gamma(n_j)}
\end{eqnarray*}
\item[ii)] If the Jacobi sequences associated to the multiple Chebyshev
polynomials of second kind, then
$$\alpha_{.|n}\equiv0$$ and the coefficients of $\Omega_n$ in the basis
$\mathcal{B}=(e_{\overline{i}_n})_{\overline{i}_n\in{\Aa}_n}$ are
given by
\begin{eqnarray*}
\lambda_{\overline{i}_{n},\overline{j}_{n}}&=&\delta_{\overline{i}_n,\overline{j}_n}\frac{2^{2d}}{\overline{n}!}
\prod_{i=1}^d\Big(\prod_{p=0}^{n_i-1}\frac{(p+2)}{(2p+3)}\Big)^2
\prod_{j=1}^d\frac{\Big[\Gamma(n_j+\frac{3}{2})\Big]^2}{(2n_j+2)\Gamma(n_j+2)}
\end{eqnarray*}
\end{enumerate}
where
$$n_{l}=\sharp\Big(\left\{{i}_{k}=l,k=1,\dots,n\right\}\Big),(1\leq l\leq d),\overline{n}=(n_{1},n_{2},\dots,n_{d})$$
with the convention
$$\prod_{p=0}^{-1}\frac{(p+1)}{(2p+1)}=1~and~\prod_{p=0}^{-1}\frac{(p+2)}{(2p+3)}=1$$
(this convention is used when $n_i=0$).
\end{theo}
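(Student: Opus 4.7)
The plan is to deduce this theorem as a direct specialization of the previous theorem on multiple Gegenbauer polynomials. The Chebyshev polynomials of the first (resp. second) kind on $[-1,1]$ are precisely the Gegenbauer polynomials with parameter $\lambda_i=0$ (resp. $\lambda_i=1$), so both claims should follow by substitution in (\ref{eqG1}) and in the Gegenbauer expression for $\alpha_{.|n}$.

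First, I would observe that for any Gegenbauer parameter (which corresponds to the symmetric Jacobi case $a_i=b_i=\lambda_i-\tfrac12$), the quantity $a_l^2-b_l^2$ in the general Jacobi formula for $\alpha_{e_l|n}$ vanishes identically; since the Gegenbauer theorem already yields $\alpha_{.|n}\equiv 0$, the same holds for both kinds of Chebyshev polynomials. This disposes of the statements about $\alpha_{.|n}$.

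Next, for the coefficients $\lambda_{\overline{i}_n,\overline{j}_n}$ of $\Omega_n$, I would substitute $\lambda_i=0$ (case i) and $\lambda_i=1$ (case ii) directly into (\ref{eqG1}). In case i, the prefactor $2^{2|\lambda|}$ becomes $1$, the factor $\frac{(p+1)(2\lambda_i+p)}{(p+\lambda_i)(2p+2\lambda_i+1)}$ reduces after cancellation (valid for $p\geq 1$, and consistent with the convention when $n_i=0$) to $\frac{p+1}{2p+1}$, and the ratio of Gammas becomes $\frac{[\Gamma(n_j+\tfrac12)]^2}{2n_j\,\Gamma(n_j)}$. In case ii, $2^{2|\lambda|}=2^{2d}$, the analogous factor simplifies to $\frac{(p+1)(p+2)}{(p+1)(2p+3)}=\frac{p+2}{2p+3}$, and the Gamma ratio becomes $\frac{[\Gamma(n_j+\tfrac32)]^2}{(2n_j+2)\Gamma(n_j+2)}$. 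Collecting these substitutions yields exactly the formulae announced in the theorem.

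The only delicate point, and the step I would write out carefully, is the handling of the indeterminate form $0/0$ that arises at $p=0$ when $\lambda_i=0$. I would treat this by algebraically cancelling the common factor $p$ before evaluating at $p=0$ (equivalently, by performing the substitution $\lambda_i=0$ only after the simplification $\frac{(p+1)(2\lambda_i+p)}{(p+\lambda_i)(2p+2\lambda_i+1)}$ has been reduced), which is legitimate because the Gegenbauer formula (\ref{eqG1}) is a rational expression in $\lambda_i$ whose value for $\lambda_i=0$ is well defined by continuous extension; alternatively, one may redo the Jacobi calculation directly with $a_i=b_i=-\tfrac12$ and verify that the $p=0$ factor equals $1$, which is consistent with the convention stated in the theorem for $n_i=0$. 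Apart from this bookkeeping, the proof is a single substitution.
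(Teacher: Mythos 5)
Your proposal is correct and follows exactly the paper's own route: the paper proves this theorem in one line by substituting $\lambda_i=0$ (resp.\ $\lambda_i=1$) into the Gegenbauer formula (\ref{eqG1}). Your additional care with the $0/0$ indeterminacy of the factor $\frac{(p+1)(2\lambda_i+p)}{(p+\lambda_i)(2p+2\lambda_i+1)}$ at $p=0$, $\lambda_i=0$ is a welcome refinement that the paper silently glosses over, but it does not change the argument.
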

\begin{proof} It is sufficient to take $\lambda_i=0$ resp.
$\lambda_i=1,~i=1,2,\dots,d$  in (\ref{eqG1}).
\end{proof}

\subsection{Multiple Legendre polynomials  on the cube}
The multiple Legendre polynomials on the cube are a particular case
of the multiple Gegenbauer polynomials on the cube with parameter
$\lambda=(\lambda_1,\lambda_2,\dots,\lambda_d)$ when\\
$\lambda_i=\frac{1}{2}, \ \ i=1,2,\dots,d$.
\begin{theo}
 For all $n\in{\N}$ and $\overline{i}_n=cl\Big(({i}_{1},\dots,{i}_{n})\Big),\overline{j}_n=cl\Big(({j}_{1},\dots,{j}_{n})\Big)\in{\Aa}_n$,
we have
$$\alpha_{.|n}\equiv0$$ and
the coefficients of $\Omega_n$ in the basis
$\mathcal{B}=(e_{\overline{i}_n})_{\overline{i}_n\in{\Aa}_n}$ are
given by
\begin{eqnarray*}
\lambda_{\overline{i}_{n},\overline{j}_{n}}&=&\delta_{\overline{i}_n,\overline{j}_n}\frac{2^d}{\overline{n}!}
\prod_{i=1}^d\Big(\prod_{p=0}^{n_i-1}\frac{(p+1)^2}{(2p+1)}\Big)^2
\prod_{j=1}^d\frac{\Big[\Gamma(n_j+1)\Big]}{(2n_j+1)}
\end{eqnarray*}
where
$$n_{l}=\sharp\Big(\left\{{i}_{k}=l,k=1,\dots,n\right\}\Big),(1\leq l\leq d),\overline{n}=(n_{1},n_{2},\dots,n_{d})$$
with the convention
$$\prod_{p=0}^{-1}\frac{(p+1)}{(2p+1)}=1$$
(this convention is used when $n_i=0$).
\end{theo}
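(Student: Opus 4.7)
The plan is to mimic the proofs already used for the Chebyshev cases, since the multiple Legendre polynomials are introduced as the specialization of the multiple Gegenbauer polynomials at $\lambda_i=\tfrac12$ for all $i\in\{1,2,\dots,d\}$. The previous Gegenbauer theorem already provides both the vanishing of $\alpha_{\cdot|n}$ and an explicit diagonal expression for $\Omega_n$ in the basis $\mathcal{B}$ in terms of the parameters $\lambda_i$. So the whole task reduces to substituting $\lambda_i=\tfrac12$ into formula (\ref{eqG1}) and carrying out the elementary simplifications.

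Concretely, I would proceed as follows. First, since $\alpha_{\cdot|n}\equiv 0$ holds for every Gegenbauer family (independent of the parameter $\lambda$), the same identity is automatic for the Legendre specialization. Second, for the $\Omega_n$ coefficients I would substitute $\lambda_i=\tfrac12$ into each factor of (\ref{eqG1}) one at a time: the global prefactor $2^{2|\lambda|}$ becomes $2^{d}$ since $|\lambda|=d/2$; the internal product $\frac{(p+1)(2\lambda_i+p)}{(p+\lambda_i)(2p+2\lambda_i+1)}$ collapses after cancellation of $(p+1)$ against $2(p+\tfrac12)=2p+1$; and the $\Gamma$-factor simplifies via $\Gamma(n_j+\lambda_j+\tfrac12)=\Gamma(n_j+1)$, $2n_j+2\lambda_j=2n_j+1$, $\Gamma(n_j+2\lambda_j)=\Gamma(n_j+1)$. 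Collecting these gives the stated expression for $\lambda_{\overline{i}_n,\overline{j}_n}$. The off-diagonal vanishing $\delta_{\overline{i}_n,\overline{j}_n}$ is inherited directly from the Gegenbauer statement and requires no new argument.

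There is essentially no obstacle in this proof: it is a pure specialization of parameters, entirely parallel to the two Chebyshev subsections. The only point deserving a minor check is the conventional empty product when $n_i=0$, which is already fixed in (\ref{eqG1}) and carries over without change. Thus the write-up can be as short as \emph{``It suffices to take $\lambda_i=\tfrac12$, $i=1,2,\dots,d$, in \textup{(\ref{eqG1})} and to simplify.''}
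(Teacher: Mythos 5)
Your proposal is correct and takes exactly the paper's route: the paper's entire proof is the single remark that it suffices to set $\lambda_i=\frac{1}{2}$, $i=1,2,\dots,d$, in (\ref{eqG1}). One small observation: the substitution you carry out actually produces the factor $\frac{p+1}{2p+1}$ (after cancelling one $(p+1)$ against $(p+\frac{1}{2})(2p+2)=(2p+1)(p+1)$), not the $\frac{(p+1)^2}{2p+1}$ printed in the theorem, which appears to be a typo in the statement, as the empty-product convention given there already uses $\frac{p+1}{2p+1}$.
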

\begin{proof} It is sufficient to take $\lambda_i=\frac{1}{2}$,
$i=1,2,\dots,d$  in (\ref{eqG1}).
\end{proof}


\begin{thebibliography}{10}

\bibitem{AcBaDh} L. Accardi, A. Barhoumi and A. Dhahri, {\it Identification of the theory of multi-dimensional
orthogonal polynomials with the theory of symmetric interacting Fock spaces with finite dimensional 1-particle space}, submitted.%

\bibitem{ab1998} L. Accardi and M. Bo\.{z}ejko,
{\it Interacting Fock space and Gaussianization of probability
measures}, Infin. Dim. Anal. Quantum Probab. Rel. Topics 1 (1998),
663-670.
%
\bibitem{aks2004}
Accardi, L., Kuo, H.-H., Stan, A.I., {\it Characterization of
probability measures through the canonically associated interacting
Fock spaces}. Infin. Dimens. Anal. Quantum Probab. Relat. Top. 7 (4)
(2004), 485505.
%
\bibitem{aks2007}
Accardi, L., Kuo, H.-H., Stan, A.I., {\it Moments and commutators of
probability measures}. Infin. Dimens. Anal. Quantum Probab. Relat.
Top. 10 (4) (2007), 591612.
%
\bibitem{al1992} L. Accardi and Y. G. Lu, {\it The Wigner semicircle law in quantum electro dynamics},
Comm. Math. Phys. 180 (1996), 605-632, Volterra preprint, No. 126
(1992).
%
\bibitem{an2002}
L. Accardi and M. Nahni, {\it Interacting Fock space and orthogonal
polynomials in several variables}. The Crossroad of
Non-commutativity, Infinite-Dimensionality, Obata, Hora, Matsui
(eds) World Scientific (2002), 192-205.
%
\bibitem{D.Ab.D.Am.} Ab. Dhahri, Am. Dhahri, {\it On the multi-dimensional Favard
Lemma }, submitted.
%
\bibitem{ch1978}
Chihara, T.S., {\it An Introduction to Orthogonal Polynomials}.
Gordon \& Breach, New York (1978).
%
\bibitem{DX} C.F. Dunkl and Y. Xu, {\it Orthogonal polynomials of several variables}, American Mathematical Society 49 (300) (1984), 1-114.
%
\bibitem{Is-As}
M. Ismail and R. Askey (Eds), {\it Recurrence relations, continued
fractions andorthogonal polynomials}. Test 5(1) (1996), 77111.
%
\bibitem{K1}
M.A Kowalski, {\it The recursion formulas for polynomials in n
variables}, SIAM J. Math. Anal. 13 (1982), 309-315.
%
\bibitem{K2}
M.A Kowalski, {\it Orthogonality and recursion formulas for
polynomials in n variables}, SIAM J. Math. Anal. 13 (1982), 316-323.
%

\bibitem{sz1975}
Szeg\"{o}, M., {\it Orthogonal Polynomials}. Coll. Publ., vol. 23.
Am. Math. Soc., Providence (1975).
%
\bibitem{X}
Y. Xu, {\it On the orthogonal Polynomials in sevral variables}.
Fields Instute Communications 14 (1997), 247-270.
%

\bibitem{G.F.D.} C. F. Dunkl, Y. Xu, Orthogonal polynomials of several
variables, {\it cambridge University Press 2001}.



\end{thebibliography}
\end{document}